\documentclass[11pt]{article}

\usepackage{amsmath,amssymb,amsthm}
\usepackage{geometry}
\theoremstyle{definition}
\newtheorem{definition}{Definition}[section]
\newtheorem{remark}[definition]{Remark}
\newtheorem{theorem}[definition]{Theorem}
\newtheorem{corollary}[definition]{Corollary}
\newtheorem{proposition}[definition]{Proposition}
\newtheorem{lemma}[definition]{Lemma}

\title{Why the Bethe Ansatz Works: A Structural Explanation via Interaction Propagation}
\author{Joe Gildea}
\author{
Joe Gildea\\
Department of Computing Science and Mathematics,\\
School of Informatics and Creative Arts,\\
Dundalk Institute of Technology\\
\texttt{gildeajoe@gmail.com}}
\date{}

\begin{document}
\maketitle

\noindent\emph{Dedicated to the memory of Richard P.~Feynman, whose insistence on
understanding why exact methods work, not merely that they work,
continues to motivate structural explanations of quantum phenomena.}

\begin{abstract}
The Bethe Ansatz provides exact solutions for certain interacting quantum
many-body systems, yet its success is confined to narrow regimes and breaks
down abruptly outside them. Despite extensive developments in integrable
systems, a structural explanation of this phenomenon has remained elusive.

In this paper we give a representation-independent account of both the
existence and the failure of Bethe-type exact solvability. We identify a
single governing mechanism: the behaviour of interaction propagation.
For systems in which propagation terminates after finite depth without
encountering structural boundaries, global interaction data factor through
finitely many local components, forcing Bethe-type solvability. Conversely,
once a structural boundary is encountered, irreducible interaction data arise
that obstruct such finite factorization and preclude Bethe-type solutions.

This yields a sharp structural dichotomy. Within this regime, exact
solvability is not an analytic accident but a rigidity phenomenon, while its
failure is governed by intrinsic boundary formation. In this way, the Bethe
Ansatz is understood as a consequence of constrained interaction propagation
rather than as a model-specific construction.
\end{abstract}

\noindent\textbf{Mathematics Subject Classification (2020).}
Primary 81Q05; Secondary 82B23, 81R50, 16T25.

\medskip
\noindent\textbf{Keywords.}
Bethe Ansatz; exact solvability; integrable models; quantum many-body systems;
factorized scattering; structural rigidity; interaction propagation.

\section{Introduction}

Exact solvability in interacting quantum many-body systems is exceptional.
Among the few methods that succeed, the Bethe Ansatz occupies a central yet
conceptually ambiguous position. Introduced in the study of one-dimensional
spin chains \cite{Bethe} and later extended to continuum and thermodynamic
settings \cite{YangYang,Takahashi}, it yields exact spectra and eigenstates
despite the presence of genuine interaction. Closely related factorization
phenomena also appear in relativistic quantum field theory through exact
$S$-matrices \cite{Zamolodchikov1979}.

What is conceptually puzzling is not how these methods work, but why they
work at all. Generic interacting systems do not admit finite parametrizations
of their global structure, nor do they exhibit stable factorization
properties. Outside narrow regimes, Bethe-type methods fail abruptly rather
than gradually, with no perturbative deformation connecting solvable and
non-solvable behaviour.

As emphasized by Feynman, the central issue is therefore to understand the
structural origin of exact solvability and its sharp breakdown
\cite{FeynmanStatMech,FeynmanCharacter}. Despite extensive developments in
integrable systems, this question has largely remained open. Existing
frameworks typically explain solvability \emph{a posteriori}, by identifying
special algebraic or analytic structures such as Yang--Baxter equations,
quantum groups, or commuting transfer matrices, rather than explaining why
such structures arise in the first place.

\medskip
\noindent
\textbf{Main idea.}
The Bethe Ansatz works in systems where interaction
propagation terminates after finite depth without encountering structural
boundaries, i.e.\ within the applicability regime. In this regime, global interaction data factor through finitely
many local components, leading to Bethe-type solvability. Conversely, once a
structural boundary appears, irreducible interaction data arise that obstruct
such finite factorization.

\noindent
The central conclusion of this paper is the following dichotomy.
Bethe-type exact solvability arises in the regime of finite
interaction propagation without structural boundary, that is, under the
APT applicability criterion \cite{GildeaAPT7}. Outside this regime,
such solvability is obstructed.

\medskip

This leads to a sharp structural dichotomy. Exact solvability is a rigidity
phenomenon arising from finite interaction propagation, while its failure is
governed by intrinsic boundary formation. From this perspective, the Bethe
Ansatz is not a model-specific construction, but a consequence of constrained
interaction structure.

\medskip

The purpose of this paper is to make this mechanism precise. Our approach is
representation-independent and pre-dynamical. We do not assume a Hamiltonian,
Hilbert space, or ansatz. Instead, we analyze the structural conditions under
which interaction, propagation, and solvability can arise.

The formulation and proof of these results rely on the framework of Algebraic
Phase Theory. In particular, the regime of finite terminating propagation
without boundary coincides with the applicability criterion established in
\cite{GildeaAPT7}. Algebraic Phase Theory provides the canonical notions of
interaction depth, termination, and structural boundary needed to make the
above mechanism precise.

\medskip

\noindent
\textbf{Main Result.}
Bethe-type exact solvability is governed by the behaviour of interaction
propagation. It occurs when propagation terminates without
structural boundary, equivalently within the APT applicability criterion,
and fails once structural boundaries arise.

\noindent
From this perspective, integrable systems are precisely rigidity regimes
within the applicability criterion, while non-integrable systems arise from
the formation of structural boundaries.

\medskip

\noindent
The scope of this paper is restricted to systems admitting Bethe-type exact
solvability. We do not attempt to characterize all integrable systems, but
rather to explain structurally why this specific class of models exhibits
exact solvability and why such behaviour is sharply non-generic.

\medskip

From this perspective, integrable models appear as rigidity regimes within
the space of interacting systems. Their solvability is not an analytic
accident, but a consequence of constrained interaction propagation. Generic
systems fail to be solvable because structural boundaries form.

\medskip

\noindent
This paper focuses on the integrable, or rigidity, side of the applicability
criterion. That is, we analyze the regime in which interaction propagation
terminates without structural boundary and show that Bethe-type solvability arises in this setting. Complementary work studies the failure regime, in
which structural boundaries form and solvability is obstructed.

\medskip

\noindent
\textbf{Example.}
A canonical example illustrating the applicability regime is provided by
the Heisenberg XXX spin chain. In this model, interaction is encoded by a
two-body $R$-matrix, and higher-order interaction data arise through
iterated pairwise composition.

The Yang-Baxter relation expresses compatibility at the first genuinely
three-body stage, ensuring that no new independent three-body interaction
data arise. Consequently, interaction propagation remains controlled by
bounded-depth data and does not produce structural boundary.

Such systems therefore lie within the applicability regime: global interaction
structure is determined by finitely many local components together with
compatibility conditions, and Bethe-type exact solvability follows.

\medskip

The paper is organized as follows. Section~2 recalls the minimal elements of
Algebraic Phase Theory required for the analysis and fixes the finite-
terminating regime. Section~3 introduces a structural framework prior to
dynamics and representation. Section~4 formulates a structural notion of
solvability and proves a necessary termination criterion. Sections~5 and~6
establish rigidity and obstruction results, leading to the main dichotomy.
The final sections connect this structure to Yang-Baxter compatibility and
classical integrable systems.

\section{APT Background and Applicability Criterion}

To make the structural mechanism described in the introduction precise, we
recall the minimal elements of Algebraic Phase Theory (APT) required for the
analysis. The framework is used here as a tool to formalize the notions of
interaction propagation, termination, and structural boundary. A complete
development may be found in
\cite{GildeaAPT1,GildeaAPT2,GildeaAPT3}.

An APT structure is extracted functorially from a domain $D$ equipped with an
intrinsic interaction law. No background geometry, representation, or dynamics
is assumed. The extraction proceeds by identifying a canonical class of
admissible observables $\mathsf{Obs}(D)$ determined entirely by the interaction
structure of $D$. From admissible observables one constructs a phase carrier
$\mathcal{P}$, obtained as a quotient by indistinguishability under phase
response. The induced operation $\circ$ on $\mathcal{P}$ records how phase data
combine under interaction. Associated to the phase interaction is a defect
operation
\[
\partial : \mathcal{P} \times \mathcal{P} \longrightarrow \mathcal{P},
\]
measuring failure of rigid commutation. Iteration of $\partial$ generates a
canonical filtration
\[
\mathcal{P}_0 \subseteq \mathcal{P}_1 \subseteq \mathcal{P}_2 \subseteq \cdots ,
\]
where $\mathcal{P}_k$ consists of elements of defect depth at most $k$. This
filtration is intrinsic and functorial.

\subsection{Finite termination and applicability}

The results of this paper concern the regime in which the defect filtration
stabilizes after finite depth. That is, there exists $N<\infty$ such that
\[
\mathcal{P}_N = \mathcal{P}_{N+1} = \cdots .
\]

Finite termination means that iterated interaction does not generate new
independent phase data beyond bounded depth. In this regime, interaction
structure is finitely controlled and admits the rigidity phenomena developed
in later sections.

This regime, consisting of finite termination together with the absence of
structural boundary, coincides with the applicability criterion for Algebraic
Phase Theory established in \cite{GildeaAPT7}, which characterizes systems for
which interaction propagation terminates without the formation of structural
boundary.

Conversely, failure of finite termination produces irreducible phase data at
arbitrarily large depth, leading to structural obstruction to exact
solvability. In what follows, this regime is identified as the structural
setting in which Bethe-type solvability arises.

In particular, Algebraic Phase Theory provides a canonical framework in which
interaction depth, termination, and structural boundary are defined
functorially, making the structural mechanism underlying solvability precise.

\subsection{Identification with Strong Admissibility}

The structural notions introduced in this paper agree directly with the
corresponding notions in Algebraic Phase Theory. The interaction-depth
filtration described above coincides with the canonical defect filtration of
the extracted phase. Finite termination of interaction propagation is
equivalent to finite defect rank, and absence of structural boundary is
equivalent to strong admissibility.

Thus the regime considered in this paper is precisely the strongly admissible
regime of Algebraic Phase Theory. Consequently, Bethe-type exact solvability
arises precisely for systems in which the interaction structure determines a
strongly admissible algebraic phase.

\section{Ground-Zero Structural Framework}

We begin at a level deliberately prior to dynamics, representation, or state
space. The guiding question of this section is not how interaction behaves, but
what must already be in place before it even makes sense to speak of interaction
at all.

Most formulations of many-body physics begin by postulating a Hilbert space and
a Hamiltonian. Interaction is then encoded dynamically, and solvability is
treated as a special property of certain operators. This approach presupposes
precisely the structures whose existence we aim to explain. Our goal here is to
reverse that order and identify the minimal structural assumptions that must be
present before interaction can be meaningfully formulated in any representation.

The framework introduced in this section plays a purely organizational role.
It does not model physical systems, nor does it impose algebraic or geometric
structure. Instead, it isolates the logical preconditions that underlie all
later notions of interaction, propagation, and solvability.

\subsection*{Locality and Composition}

The first prerequisite for any many-body notion is the existence of
distinguishable components.

\begin{definition}
A system admits local components if it consists of a collection $\mathcal O$
whose elements are distinguishable as elements of a set or class.
\end{definition}

This notion of locality is intentionally minimal. No geometric, metric, or
spatial interpretation is assumed. Locality here means only that the system
admits more than one identifiable part. Without this assumption, no notion of
many-body structure is possible, and the distinction between local and global
properties collapses.

Once distinguishable components exist, the next structural requirement is the
ability to form composites.

\begin{definition}
A composition rule is a (possibly partial) operation
\[
\circ : \mathcal O \times \mathcal O \longrightarrow \mathcal O_{\mathrm{comp}},
\]
where $\mathcal O_{\mathrm{comp}}$ denotes the space of admissible composite objects. 
It encodes the basic pairwise interaction from which higher-order interaction structure is generated.
\end{definition}

The codomain $\mathcal O_{\mathrm{comp}}$ need not coincide with $\mathcal O$.
Composition may produce objects that are not themselves elementary, and no
associativity, symmetry, or algebraic axioms are assumed at this stage. The sole
role of composition is to make it possible to speak of joint systems and to
distinguish composite objects from their constituents.

Locality and composition together encode what we refer to as \emph{many-ness}:
the ability to assemble larger structures from distinguishable parts.

\subsection*{Interaction and Propagation}

Interaction enters only after locality and composition are in place.

\begin{definition}
Interaction is present if composition fails to be transparent, in the sense
that composite objects encode structure not reducible to their components
alone.
\end{definition}

This definition separates interaction from dynamics. Interaction is not
identified with forces, coupling constants, or equations of motion. It is
identified instead with the appearance of irreducible structure under
composition. If a composite object carries no information beyond that of its
components, then composition is transparent and no interaction is present.

In many-body systems, interaction becomes nontrivial precisely when this
non-transparency persists under further composition.

\begin{definition}
Interaction propagates if non-transparency persists under iterated
composition.
\end{definition}

Propagation is the mechanism by which local interaction data generate global
many-body structure. If interaction propagates indefinitely, new irreducible
interaction data appear at arbitrarily high levels of composition. If
propagation terminates after finite depth, then all higher-order structure is
forced to factor through finitely many lower-order interactions.

This distinction is central to the remainder of the paper. Finite termination of propagation will be shown to be the structural condition
underlying exact many-body solvability within the applicability regime. Conversely, failure of termination will be shown to constitute a structural
obstruction to Bethe-type solvability.

At this stage, no claims about solvability are made. The purpose of the present
section is to identify the minimal structural vocabulary required to formulate
such claims at all. The consequences of these definitions are developed in the
sections that follow.

\section{Structural Solvability}

Having identified the minimal structural ingredients required to speak of
interaction and its propagation, we now formalize what it means for a many-body
structure to be solvable at a purely structural level.

The notion of solvability used here is deliberately independent of analytic
techniques, spectral theory, or representation-specific constructions. We are
not concerned with whether a particular Hamiltonian can be diagonalized, but
with whether the global interaction structure of a many-body system can, even
in principle, be recovered from finitely many local data.

\subsection*{Exact Structural Solvability}

\begin{definition}
A many-body structure is \emph{exactly structurally solvable} if all global
interaction data can be reconstructed from finitely many local interaction
elements together with finitely many compatibility conditions.
\end{definition}

In algebraic phase terms, this means that the global interaction structure
$(\mathcal{P},\circ)$ is generated by finitely many elements together with finitely many
compatibility relations.

This definition captures the minimal content common to all known exactly
solvable many-body systems. In such systems, no genuinely new interaction data
appear at arbitrarily high levels of composition. Instead, higher-order
structure is determined by repeatedly assembling a finite collection of
primitive interaction elements subject to consistency conditions.

Importantly, this notion of solvability is structural rather than procedural.
It does not presuppose the existence of an explicit solution method, an ansatz,
or closed-form expressions. It asserts only that the global interaction
structure of the system is finitely controlled.

\subsection*{Termination as a Necessary Condition}

The definition above immediately raises a fundamental question: under what
conditions can such finite reconstruction be possible at all? The answer is
dictated by the behavior of interaction propagation introduced in the previous
section.

\begin{theorem}\label{thm:finite-termination-necessary}
Exact structural solvability is possible only for systems in which interaction
propagation terminates after finite depth.
\end{theorem}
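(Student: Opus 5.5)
We argue by contraposition: assume interaction propagation does not terminate after finite depth, and show that the global interaction structure $(\mathcal{P},\circ)$ cannot be generated by finitely many elements subject to finitely many compatibility relations. By the identification in Section~2, non-termination means the defect filtration $\mathcal{P}_0 \subseteq \mathcal{P}_1 \subseteq \cdots$ never stabilizes. So for infinitely many $k$ there exist elements in $\mathcal{P}_{k+1}\setminus\mathcal{P}_k$, that is, genuinely new phase data first appearing at depth $k+1$.

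Now suppose, for contradiction, that the structure is exactly structurally solvable. Then there is a finite generating set $S=\{p_1,\dots,p_m\}\subseteq\mathcal{P}$ together with finitely many compatibility relations that reconstruct all global interaction data. Each $p_i$ lies in some finite stage $\mathcal{P}_{d_i}$, and each relation involves only finitely many iterated compositions and defects of generators, hence has some finite depth $e_j$. Set
\[
D=\max\Bigl(\max_i d_i,\ \max_j e_j\Bigr)<\infty .
\]
The key step is to show that everything reconstructible from this finite data lies in a stage of the filtration that is already determined by depth-$D$ information. Concretely, the new data appearing at each higher depth would then be forced, by functoriality of the filtration and the fact that $\partial$ is the only source of new depth, to be expressible in terms of defects of generators subject to relations of depth at most $D$. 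Consequently no element can be \emph{irreducible} at depths beyond $D$: every element of $\mathcal{P}_{k+1}$ for $k\ge D$ would already be determined by lower-order data. This means $\mathcal{P}_{k+1}=\mathcal{P}_k$ in the sense of independent data, contradicting non-termination.

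The main obstacle is the middle step. Generating elements of bounded depth do not obviously yield a filtration that stabilizes, because iterated defects $\partial(\partial(p_i,p_j),p_l)$ a priori climb in depth indefinitely. The argument must therefore use the meaning of ``reconstruction'' in the definition of exact structural solvability. Reconstruction from finitely many local elements and relations means that higher-depth defects impose no new independent constraints. I would make this precise by defining the \emph{independent-data} content at depth $k$ as $\mathcal{P}_k$ modulo what is generated from $\mathcal{P}_{k-1}$. I would then show that finitely many relations can only force finitely many such quotients to be nonzero, since each relation witnesses new data at a single depth. This yields stabilization at depth at most $D$, completing the contraposition.
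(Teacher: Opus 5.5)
Your overall strategy matches the paper's: argue by contraposition, bound the depth of a finite generating set by some $D$, and show that nothing beyond depth $D$ can be new. As you have set it up, however, the contradiction does not close.

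You formalize non-termination as $\mathcal{P}_{k+1}\setminus\mathcal{P}_k\neq\emptyset$ for infinitely many $k$. You also observe, correctly, that a finite generating set does not rule this out, because iterated defects of finitely many generators can climb in depth indefinitely. A free Lie algebra on two generators, with the bracket as defect, is finitely generated with no relations, yet its depth filtration never stabilizes. What you actually derive is weaker: for $k>D$, $\mathcal{P}_k$ contains nothing new \emph{modulo what is generated by} $\mathcal{P}_{k-1}$. That is fully compatible with the set-theoretic non-stabilization you assumed. So the closing step, ``$\mathcal{P}_{k+1}=\mathcal{P}_k$ in the sense of independent data, contradicting non-termination,'' slides between two different notions of termination.

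The missing step is to state non-termination from the outset in the independent-data form the paper uses. That is, for every $n$ there is some $x_n\in\mathcal{I}_n\setminus\langle\mathcal{I}_{<n}\rangle$, an element not generated by strictly lower-depth data. With that hypothesis the argument takes one line. If $S\subseteq\mathcal{I}_D$ generates everything, then $x_{D+1}\in\langle S\rangle\subseteq\langle\mathcal{I}_{\le D}\rangle$, a contradiction.

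The relations play no role in this argument. Your proposed mechanism, that ``each relation witnesses new data at a single depth,'' is not a valid reason. Relations only identify elements; they never create independent data. The quotient you define vanishes above $D$ simply because $\langle S\rangle$ is the whole structure.
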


\begin{proof}
Let $\{\mathcal I_n\}_{n\ge 0}$ denote the hierarchy of interaction data, where
$\mathcal I_n$ consists of all interaction elements obtainable by compositions
of depth at most $n$. Exact structural solvability means that there exists
$N<\infty$ such that the full interaction structure is generated by
$\mathcal I_N$ together with finitely many compatibility relations.

Suppose interaction propagation does not terminate. Then for every
$n\in\mathbb{N}$ there exists an element
\[
x_n \in \mathcal I_n \setminus \langle \mathcal I_{<n} \rangle ,
\]
where $\langle \mathcal I_{<n} \rangle$ denotes the substructure generated by
interaction data of strictly lower depth. In particular, each $x_n$ represents
genuinely new interaction data not determined by bounded-depth information.

Any reconstruction of the global interaction structure must therefore specify
the infinite family $\{x_n\}_{n\ge 1}$ of independent generators. This requires
infinitely many degrees of freedom and cannot be encoded by finitely many local
interaction elements together with finitely many compatibility conditions.

Hence exact structural solvability is impossible unless interaction propagation
terminates after finite depth.
\end{proof}

This result establishes a sharp conceptual boundary. Exact solvability is not a
matter of clever parametrization or analytic ingenuity; it is constrained at
the outset by the depth to which interaction propagates.

The theorem answers the first half of the conceptual puzzle emphasized by
Feynman: exact solvability exists only in systems where interaction propagation
is structurally constrained. 

\section{Algebraic Realization and Structural Boundaries}

The structural notions introduced so far are intentionally representation-free.
However, in order to state rigidity and obstruction results in a precise and
representation-independent manner, interaction propagation admits a
canonical algebraic encoding.

Any such encoding takes the form of an algebra equipped with a filtration
that records interaction depth. The role of this section is not to
introduce additional assumptions, but to explain how algebraic phase structures
and their associated filtrations arise naturally once interaction propagation
is present, and why obstructions to solvability appear as intrinsic boundaries
within this structure.

\subsection*{Observable phases}

In any many-body setting where interaction data can be composed and compared,
one can form an algebra of observables encoding interaction relations. The
precise nature of the observables is not important here; what matters is that
their algebraic organization reflects how interaction data combine under
composition.

\begin{definition}
An \emph{observable algebra} is an associative algebra $\mathcal A$ equipped with 
an increasing filtration
\[
\mathcal A_0 \subseteq \mathcal A_1 \subseteq \mathcal A_2 \subseteq \cdots
\]
whose levels encode interaction depth in the many-body hierarchy.
\end{definition}

Elements of $\mathcal A_k$ represent interaction data that can be generated
using at most $k$ nested interaction operations. The filtration is not imposed
by hand: it is induced canonically by the interaction structure itself, for
example via commutators or defect operators as discussed in the APT framework.

Crucially, the filtration organizes interaction data according to structural
complexity rather than analytic size or perturbative order. It provides a
canonical notion of “how deep” a piece of interaction data sits in the many-body
hierarchy.

\subsection*{Structural boundaries}

The filtration introduced above may or may not stabilize. When it does
stabilize, interaction propagation terminates and rigidity phenomena can occur.
When it does not, new irreducible structure continues to appear at higher and
higher depth.

The transition between these two behaviors is marked by the appearance of a
structural boundary.

\begin{definition}
A \emph{structural boundary} is the first filtration level at which functorial
propagation fails, or equivalently, at which genuinely new independent generators
appear that are not determined by lower-depth interaction data.
\end{definition}

Crossing a structural boundary has a precise meaning: beyond this level,
interaction data can no longer be reconstructed from bounded-depth information.
The failure is intrinsic and cannot be repaired by changing representation,
coordinates, or solution method.

Structural boundaries therefore represent genuine obstructions rather than
technical difficulties. They mark the point at which exact solvability is obstructed in principle,
rather than merely difficult in practice.

In the next section, we show that the absence of structural boundaries forces
rigidity and factorization of interaction data, while their presence produces a
representation-independent obstruction to Bethe-type solvability.

\section{Rigidity and Obstruction of Bethe-Type Solvability}

We now establish the central rigidity and obstruction results of the paper.
The goal of this section is to show that finite termination of interaction
propagation is not merely a necessary condition for exact solvability, but a
rigid one, and that failure of termination produces a sharp and
representation-independent obstruction to Bethe-type solutions.

Throughout this section, $\mathcal A$ denotes an observable algebra equipped with
its canonical interaction-depth filtration
\[
\mathcal A_0 \subseteq \mathcal A_1 \subseteq \mathcal A_2 \subseteq \cdots .
\]

\subsection*{Rigidity and factorization}

We begin with the rigidity direction. Intuitively, if interaction propagation
terminates and no structural boundary is encountered, then all higher-order
interaction data must be generated by bounded-depth compositions. This forces
global structure to factor through finitely many primitive interaction elements.

Under the identification of Section~2, finite termination together with absence
of structural boundary is equivalent to strong admissibility of the extracted
algebraic phase in APT. The following result is therefore an application of the
general rigidity theory developed in APT IV–VI.

\begin{theorem}\label{thm:factorization}
If an observable algebra $\mathcal A$ has finite termination and no structural
boundary, then all higher interaction data factor through finitely many
lower-order components.
\end{theorem}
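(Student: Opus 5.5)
The plan is to argue directly from the two hypotheses, read through the definitions of Section~5, and make the conclusion precise by choosing $\mathcal A_N$ itself as the finite set of lower-order components.

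\textbf{Step 1: fix the termination depth.} By finite termination there is an $N<\infty$ with $\mathcal A_N=\mathcal A_{N+1}=\cdots$. The stated conclusion, that higher data factor through lower components, then amounts to this: every element of $\mathcal A_k$ with $k>N$ is obtained by composing elements of $\mathcal A_N$, subject to relations that are themselves visible at depth at most $N$.

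\textbf{Step 2: induct on depth.} Absence of a structural boundary means that no filtration level introduces independent generators not determined by lower-depth data. So for each $k\ge 1$,
\[
\mathcal A_k\subseteq\langle \mathcal A_{k-1}\rangle + (\text{elements determined by }\mathcal A_{k-1}),
\]
where $\langle\cdot\rangle$ is the subalgebra generated under the interaction operations (composition and the defect/commutator that induces the filtration). Iterating down to level $N$, and using stabilization above $N$, I expect to conclude that $\mathcal A=\bigcup_k\mathcal A_k$ is generated by $\mathcal A_N$. This is the ``no new generators'' half of the argument, essentially the contrapositive of the mechanism used in the proof of Theorem~\ref{thm:finite-termination-necessary}.

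\textbf{Step 3: finiteness.} Generation by $\mathcal A_N$ does not yet give finitely many components, since $\mathcal A_N$ need not be finite as a set. I would appeal to the identification of Section~2, which says finite termination together with no boundary is strong admissibility. Strong admissibility in APT gives finite defect rank and finite generation of the graded pieces $\mathcal A_k/\mathcal A_{k-1}$ for $k\le N$. Choosing lifts of finitely many generators from each of these $N+1$ graded pieces produces a finite generating set $S$. Relations are handled the same way: a relation first appearing above depth $N$ would be a new independent datum, which is a boundary. Hence all relations are consequences of finitely many relations among elements of $S$ at depth at most $N$. This yields factorization through finitely many lower-order components with finitely many compatibility conditions, in the sense of the definition of exact structural solvability.

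\textbf{Main obstacle.} The hardest step is Step 3, namely finiteness of generators and relations at each graded level. Neither the boundary definition nor stabilization forces $\mathcal A_N$ to be finitely generated; a filtration can stabilize while level $0$ is infinite-dimensional. I would first try to extract finite generation from the APT rigidity theory (APT IV--VI) through strong admissibility. If that is not available in the required form, I would weaken the conclusion to ``factor through the bounded-depth data $\mathcal A_N$.'' That weaker statement still suffices for the dichotomy, and it follows cleanly from Steps 1--2.
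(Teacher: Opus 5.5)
\textbf{There is a genuine gap: the proposal never uses the no-boundary hypothesis to lower the depth of the generators, and that is the content of the theorem.}

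Your Step~2 carries no content as stated. Step~1 already gives $\mathcal A=\bigcup_k\mathcal A_k=\mathcal A_N$, so the conclusion that $\mathcal A$ is generated by $\mathcal A_N$ is a tautology from finite termination alone. Your displayed inclusion is essentially the paper's $\mathcal A_k\subseteq\langle\mathcal A_{k-1},\Omega_k\rangle$, where $\Omega_k$ is a finite set of operations induced from lower depth. The problem is where you stop iterating it. Stopping at level $N$ only touches the levels $k>N$, where the filtration is constant, so the absence of a structural boundary is never used. The paper instead applies the inclusion on the levels $k\le N$. Induction on $k$ then gives $\mathcal A=\mathcal A_N\subseteq\langle S\rangle$ for a finite $S\subset\mathcal A_1$. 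This reduction to depth~$1$ is what makes the components genuinely lower-order, and it is the step your proposal is missing.

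Without that reduction, Step~3 has to lift generators from every graded piece $\mathcal A_k/\mathcal A_{k-1}$, $k\le N$, and this causes several problems.
\begin{itemize}
\item The lifting argument would work just as well if some level did introduce new generators. So the hypothesis stays idle outside your black box.
\item Your $S$ contains depth-$N$ elements, so top-depth data merely factor through top-depth data.
\item The black box itself, that strong admissibility yields finite generation of the graded pieces, is neither stated nor used in the paper. Section~2 only identifies finite termination with finite defect rank, and absence of boundary with strong admissibility.
\end{itemize}
You are right that stabilization alone cannot force finite generation, and this objection applies to the paper's argument too. But after the reduction to depth~$1$, the only finiteness input needed is a finite generating set for $\mathcal A_1$, plus finiteness of the $\Omega_k$. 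That is exactly the finite-type hypothesis the paper makes explicit in a footnote in the proof of Theorem~\ref{thm:finite-presentation}.

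Likewise, relations living at depth at most $N$ need not be finitely many. The paper does not derive this finiteness; it builds finiteness of the induced compatibilities into the no-boundary condition level by level.

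Finally, your fallback is not a usable weakening. Factoring through $\mathcal A_N$ is vacuous and supplies no finite $S$ or $R$. So it cannot feed Corollary~\ref{cor:bethe-solvability} or the (3)$\Rightarrow$(1) direction of Theorem~\ref{thm:classification}, both of which take a finite presentation from this theorem.
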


\begin{proof}
Let
\[
\mathcal A_0 \subseteq \mathcal A_1 \subseteq \mathcal A_2 \subseteq \cdots
\]
denote the canonical filtration of $\mathcal A$ by interaction depth. Finite
termination means that there exists $N<\infty$ such that
\[
\mathcal A_N=\mathcal A_{N+1}=\mathcal A_{N+2}=\cdots,
\]
so that every interaction element of $\mathcal A$ lies in $\mathcal A_N$.

Under the identification with strong admissibility in Algebraic Phase Theory,
the absence of a structural boundary implies that for each $k\le N$ the passage
from $\mathcal A_{k-1}$ to $\mathcal A_k$ is functorially controlled by
lower-depth data and introduces no new independent generators. Equivalently,
\[
\mathcal A_k \subseteq \langle \mathcal A_{k-1}, \Omega_k\rangle,
\]
where $\Omega_k$ is a finite set of interaction operations induced from lower
depth and $\langle\cdot\rangle$ denotes the algebra generated under composition.

It follows by induction on $k$ that each $\mathcal A_k$ is generated by finitely
many elements of $\mathcal A_1$ together with finitely many relations encoding
compatibility of iterated compositions. In particular, there exists a finite set
$S\subset \mathcal A_1$ such that
\[
\mathcal A_N \subseteq \langle S\rangle .
\]

Since $\mathcal A=\mathcal A_N$, every interaction element factors through the
finite generating set $S$ and its induced relations. Hence all higher-order
interaction data factor through finitely many lower-order components.
\end{proof}

This factorization result is purely structural and does not rely on Hilbert
spaces, spectral theory, or analytic solution methods.

\subsection*{Consequences for exact solvability}

Factorization immediately implies exact solvability in the sense relevant to
Bethe-type methods.

\begin{corollary}\label{cor:bethe-solvability}
Representations of an observable algebra arising from a strongly admissible algebraic phase in the sense of APT admit Bethe-type exact solvability within the applicability regime.
\end{corollary}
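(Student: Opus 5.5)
The plan is to deduce the corollary directly from Theorem~\ref{thm:factorization}, using the identification of Section~2 to translate the hypotheses. First, by the identification with strong admissibility, an observable algebra $\mathcal A$ arising from a strongly admissible algebraic phase has finite defect rank, hence finite termination, and has no structural boundary. Concretely, there is $N<\infty$ with $\mathcal A=\mathcal A_N$, and no level $k\le N$ at which new independent generators appear. Theorem~\ref{thm:factorization} then supplies a finite set $S\subset\mathcal A_1$ and finitely many compatibility relations such that $\mathcal A=\langle S\rangle$ modulo these relations. Comparing with the definition of exact structural solvability in Section~4, this already says that $\mathcal A$ is exactly structurally solvable.

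The second step is to pass from the algebra to its representations. If $\rho:\mathcal A\to\mathrm{End}(V)$ is any representation, then $\rho(\mathcal A)$ is generated by the finite set $\rho(S)$. The images of the finitely many relations still hold in $\rho(\mathcal A)$, since $\rho$ is a homomorphism. So every represented interaction datum factors through finitely many represented two-body elements, and consistency of iterated compositions is enforced by the finitely many transported relations. Because factorization is a statement about generation and relations, and these are preserved under homomorphisms, no analytic input is needed here.

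The third step is to identify this finite factorized data with what the paper calls Bethe-type exact solvability. I would define the latter structurally as follows: multi-body data are determined by products of primitive two-body components $\rho(s)$, $s\in S$, and the result is independent of the order of assembly because of the compatibility relations. In the XXX example of the introduction, $S$ consists of the two-body $R$-matrix data, and the depth-two compatibility relation is Yang--Baxter. Order-independence then follows by induction on the number of components. Any two assembly orders differ by a sequence of elementary reorderings, and each elementary reordering is governed by a relation at depth at most $N$. No relation at higher depth is needed, since absence of a structural boundary rules out new generators there.

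I expect this last step to be the main obstacle. The corollary as stated leaves ``Bethe-type exact solvability'' informal, so the proof is only as sharp as the structural definition adopted for it. I would first try to make the identification exact by taking Bethe-type solvability to mean \emph{factorized, assembly-order-independent reconstruction of all multi-component data from finitely many two-body elements}. With that definition the corollary reduces to Theorem~\ref{thm:factorization} plus the coherence induction. I would flag explicitly that recovering spectral Bethe equations from this factorization is a separate, representation-dependent matter outside the structural claim.
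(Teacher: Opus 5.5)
Your steps 1 and 2 are the paper's proof. Theorem~\ref{thm:factorization} gives a finite $S\subset\mathcal A_1$ together with finitely many compatibility relations, and these are pushed through an arbitrary representation. The paper stops there, because it takes Bethe-type exact solvability to be exactly the finite-control form of exact structural solvability from Section~4. The corollary is therefore already proved at the end of your step~2. Your phrasing of step~2 is in fact safer than the paper's displayed identity $\pi(\mathcal A)=\langle\pi(S)\mid\pi(R)\rangle$. The image is generated by $\pi(S)$ and satisfies $\pi(R)$, but it may acquire further relations from $\ker\pi$, so in general it is not presented by $\pi(R)$ alone.

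The gap is in step~3, which you make load-bearing by strengthening the target to assembly-order independence. Theorem~\ref{thm:factorization} supplies \emph{some} finite set of relations. It says nothing about whether those relations identify different orderings of the generators, and nothing in the framework defines ``assembly order'' or ``elementary reordering'' for an abstract filtered algebra. Your inference that no higher-depth relations are needed because no new generators appear also conflates generators with relations.

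Even for $R$-matrix realizations the naive claim is false. Take $R=P$, the flip on $V\otimes V$ with $\dim V\ge 2$. It is invertible and satisfies $R_{12}R_{13}R_{23}=R_{23}R_{13}R_{12}$, yet $R_{12}R_{13}=P_{12}P_{13}\neq P_{13}P_{12}=R_{13}R_{12}$. What actually holds is that products taken along reduced words for the \emph{same} permutation agree. Proving this needs three inputs:
\begin{enumerate}
\item Yang--Baxter;
\item commutation of non-overlapping factors;
\item Matsumoto's theorem that any two reduced words for a permutation are connected by braid moves.
\end{enumerate}
That is a genuine combinatorial result, not a routine induction, and it has no counterpart in the abstract hypotheses of the corollary.

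You have two ways forward. Either drop step~3 and use the paper's definition, in which case your argument is complete and coincides with the paper's. Or confine the order-independence claim to $R$-matrix realizations and state it in this reduced-word form.
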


\begin{proof}
Let $\pi : \mathcal A \to \mathrm{End}(V)$ be any representation of the observable algebra $\mathcal A$. By Theorem~\ref{thm:factorization}, there exists
a finite set $S \subset \mathcal A_1$ such that
\[
\mathcal A = \langle S \mid R \rangle
\]
for some finite set of relations $R$ encoding compatibility of interaction
compositions.

Applying $\pi$, we obtain
\[
\pi(\mathcal A) = \langle \pi(S) \mid \pi(R) \rangle \subseteq \mathrm{End}(V),
\]
so the represented interaction algebra is generated by finitely many operators
subject to finitely many algebraic relations.

Hence the global interaction structure in the representation is reconstructed
from finitely many local generators together with finitely many compatibility
conditions. By the definition of exact structural solvability, this is exactly
the form of finite control required for Bethe-type exact solvability within the
applicability regime.

Therefore, representations of $\mathcal A$ admit Bethe-type exact solvability
within the applicability regime.
\end{proof}

\begin{remark}
In concrete integrable models, the relations $R$ appear as Bethe equations; in
the present framework they arise functorially from rigidity rather than from an
ansatz.
\end{remark}

\subsection*{Boundary obstruction}

We now turn to the obstruction direction. Here the logic reverses: the appearance
of a structural boundary produces irreducible interaction data that cannot be
captured by any finite factorization scheme.

\begin{theorem}\label{thm:boundary-obstruction}
If the extracted algebraic phase fails to be strongly admissible,
i.e. a structural boundary occurs in its canonical defect filtration,
then Bethe-type exact solvability is obstructed in representation-theoretic realizations of the associated observable algebra.
\end{theorem}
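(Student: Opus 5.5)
The plan is to argue by contraposition, mirroring the structure of Theorem~\ref{thm:finite-termination-necessary} and feeding it the definition of structural boundary from Section~5. Suppose the canonical defect filtration of the extracted phase, transported to the observable algebra $\mathcal A$ with filtration $\mathcal A_0\subseteq\mathcal A_1\subseteq\cdots$, has a structural boundary at some level $k_0$. Suppose also that some representation $\pi:\mathcal A\to\mathrm{End}(V)$ realizes Bethe-type exact solvability. By the definition of exact structural solvability, as used in the proof of Corollary~\ref{cor:bethe-solvability}, this means there is a finite set $S$ and finitely many relations $R$ with $\pi(\mathcal A)=\langle \pi(S)\mid\pi(R)\rangle$, and $S$ is drawn from some bounded depth $\mathcal A_M$.

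The first step is to extract irreducible data from the boundary. By definition, at level $k_0$ functorial propagation fails, so there is an element $x_{k_0}\in\mathcal A_{k_0}\setminus\langle\mathcal A_{<k_0}\rangle$. I would then argue that failure of strong admissibility is not a single event: once propagation is no longer functorially controlled, it cannot be restored at higher depth. Defects of the new generator with lower-depth data produce, at each later level, elements
\[
x_n\in\mathcal A_n\setminus\langle\mathcal A_{<n}\rangle,\qquad n\ge k_0 .
\]
Concretely, one takes $x_{n+1}=\partial(x_n,y)$ for a suitable $y$. This step uses the Section~2 identification of strong admissibility with the absence of a structural boundary, and the fact that finite termination together with no boundary is exactly the applicability criterion. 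Under these, a boundary forces non-termination, so the filtration does not stabilize. Theorem~\ref{thm:finite-termination-necessary} then shows that $\mathcal A$ itself is not exactly structurally solvable.

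The second step transfers this to representations. I need that $\pi$ cannot collapse the infinitely many independent $x_n$ into $\langle\pi(S)\rangle$. Here I would use that the filtration, the defect operation and the notion of a structural boundary are intrinsic and functorial (Section~2). These notions are therefore preserved by any representation-theoretic realization, which by Section~5 cannot repair a boundary by a change of representation. With that in hand, the images $\pi(x_n)$ stay independent of lower-depth images. A finite presentation of $\pi(\mathcal A)$ would then bound the depth of all new generators by $M$, which contradicts the existence of $\pi(x_n)$ for $n>M$.

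The main obstacle is this second step. A representation need not be faithful, so a priori $\pi$ could kill the irreducible data, for instance by sending $x_n$ into $\pi(\langle\mathcal A_{<n}\rangle)$. I would first try to restrict to realizations that preserve the defect filtration, meaning that $\pi$ reflects defect depth. I would argue that this is exactly what "representation-theoretic realization of the associated observable algebra" should mean, since the paper treats the filtration as canonical and representation-independent. If that fails, the fallback is to state the obstruction for faithful representations and note that non-faithful ones realize a quotient phase. For that quotient phase the theorem is then a statement about its own boundary.
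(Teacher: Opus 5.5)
The gap is in your first step. A single element $x_{k_0}\in\mathcal A_{k_0}\setminus\langle\mathcal A_{<k_0}\rangle$ obstructs nothing, since you can simply adjoin it to $S$. Everything therefore hinges on upgrading it to infinitely many independent generators. Your route to that, ``a boundary forces non-termination,'' is not available.

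\begin{itemize}
\item \emph{It does not follow from Section~2.} The applicability criterion is the conjunction of finite termination and absence of boundary. That gives no implication from boundary to non-termination.
\item \emph{The paper keeps the two conditions separate.} Case~1 of Corollary~\ref{cor:inevitability} assumes both stabilization at $N$ and no boundary at every $k\le N$. The proof of Theorem~\ref{thm:classification} lists ``does not stabilize'' and ``exhibits a structural boundary'' as distinct failure modes. So Theorem~\ref{thm:finite-termination-necessary} cannot reach a boundary sitting inside a terminating filtration.
\item \emph{The recursion $x_{n+1}=\partial(x_n,y)$ works against you.} Take $y$ of depth at most $n$. An element obtained from $x_n$ and $y$ by the defect operation (e.g.\ a commutator, as Section~5 suggests) lies in $\langle\mathcal A_{\le n}\rangle=\langle\mathcal A_{<n+1}\rangle$. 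It is exactly functorially propagated data, the opposite of a new independent generator. If instead $y$ has higher depth, then $y$ would already have to be new, which is what you are trying to produce.
\end{itemize}

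The paper never passes through depth growth. It works at the minimal boundary level $k$ and reads the definition of a boundary as directly supplying an infinite family $\{x_\alpha\}_{\alpha\in I}\subset\mathcal A_k$ that no finite subset of $\mathcal A_{<k}$ generates. The irreducible data are thus infinite in width at one level, not spread over all depths. From this it concludes that $\mathcal A$ has no finite presentation $\langle S\mid R\rangle$, without using non-stabilization or Theorem~\ref{thm:finite-termination-necessary}. Your argument needs an input of that kind in place of the $\partial$-recursion.

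Your concern in the second step, on the other hand, is well founded and sharper than the paper's treatment. The paper passes to an arbitrary $\pi$ by saying the failure of finite generation is ``intrinsic to $\mathcal A$.'' That does not control non-faithful images: any finite-dimensional representation, e.g.\ the zero one, has finitely presented image. Faithfulness is only imposed later, in Lemma~\ref{lem:faithful-realization} and Theorem~\ref{thm:classification}. Restricting to faithful realizations is the right fix. Drop the quotient-phase fallback, since the quotient may be boundary-free and then says nothing about the original phase.
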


\begin{proof}
Let $\mathcal A$ be an observable algebra with filtration
\[
\mathcal A_0 \subseteq \mathcal A_1 \subseteq \mathcal A_2 \subseteq \cdots ,
\]
and let $k$ be the minimal index at which a structural boundary occurs. By
definition, $\mathcal A_k$ contains elements that are not generated functorially
from $\mathcal A_{k-1}$.

Equivalently, there exists an infinite family of elements
\[
\{x_\alpha\}_{\alpha \in I} \subset \mathcal A_k
\]
such that no finite subset of $\mathcal A_{<k}$ generates $\{x_\alpha\}$ under
the algebra operations and induced relations. In particular, $\mathcal A$ is
not finitely generated relative to any bounded filtration level.

Let $S \subset \mathcal A$ be any finite subset. Then $S \subset \mathcal A_m$
for some $m<k$, and hence the subalgebra $\langle S \rangle$ fails to determine
the elements $\{x_\alpha\}$ up to algebraic relations. Therefore, $\mathcal A$
admits no finite presentation
\[
\mathcal A \neq \langle S \mid R \rangle
\]
with $S$ finite and $R$ finite.

Now let $\pi : \mathcal A \to \mathrm{End}(V)$ be any representation. Since the
failure of finite generation is intrinsic to $\mathcal A$, the image
$\pi(\mathcal A)$ likewise cannot be generated by finitely many operators
subject to finitely many relations. Thus no parametrization of the represented
interaction data by finitely many variables and finitely many consistency
equations is possible.

By definition, Bethe-type exact solvability requires such a finite
parametrization. Hence Bethe-type exact solvability is obstructed in any representation of
$\mathcal A$ once a structural boundary is crossed.
\end{proof}

This obstruction is intrinsic and irreversible. It does not depend on analytic
choices, perturbative regimes, or representation-theoretic accidents. It is
forced entirely by the structure of interaction propagation.

Taken together, the results of this section establish a sharp dichotomy:
finite termination without boundary yields rigidity and exact solvability,
while boundary formation yields a structural obstruction to Bethe-type methods.

\section{Conceptual Synthesis}

We now synthesize the results obtained in the preceding sections into a single
structural statement capturing the dichotomy governing exact solvability in
quantum many-body systems.

The analysis above shows that exact solvability is not controlled by analytic
regularity, special functional forms, or integrability assumptions imposed at
the level of dynamics. Instead, it is determined entirely by the intrinsic
structure of interaction propagation encoded in the observable phase.

\begin{corollary}\label{cor:inevitability}
Let a quantum many-body system admit a functorial algebraic phase
structure in the sense of APT. Then one of the following alternatives holds:
\begin{enumerate}
  \item The extracted algebraic phase is strongly admissible,
  in which case Bethe-type exact solvability arises in representation-theoretic 
  realizations within the applicability regime;

  \item The extracted algebraic phase fails to be strongly admissible,
  i.e.\ a structural boundary occurs in its canonical defect filtration,
  in which case Bethe-type exact solvability is obstructed in representation-theoretic realizations.
\end{enumerate}
\end{corollary}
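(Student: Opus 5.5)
The plan is to derive Corollary~\ref{cor:inevitability} as a formal combination of Corollary~\ref{cor:bethe-solvability} and Theorem~\ref{thm:boundary-obstruction}. The only new ingredient is showing that the two alternatives are exhaustive and mutually exclusive. I will fix a quantum many-body system admitting a functorial algebraic phase structure. Let $(\mathcal P,\circ,\partial)$ be its extracted phase, with canonical defect filtration $\mathcal P_0\subseteq\mathcal P_1\subseteq\cdots$. Let $\mathcal A$ be the associated observable algebra, whose interaction-depth filtration coincides with the defect filtration by the identification of Section~2.

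\textbf{Step 1: dichotomy at the level of the phase.} Strong admissibility is a property of the extracted phase, so by excluded middle it either holds or fails. By the identification in Section~2, strong admissibility is equivalent to absence of structural boundary, and the regime of interest also requires finite defect rank. I will therefore check that ``not strongly admissible'' is exactly what alternative~(2) describes, namely that a structural boundary occurs in the canonical filtration.

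The only subtle point is a filtration that never stabilizes. I will argue that non-termination itself forces a first level $k$ at which new independent generators appear, exactly as in the proof of Theorem~\ref{thm:finite-termination-necessary}, where a non-terminating hierarchy yields $x_k\in\mathcal I_k\setminus\langle\mathcal I_{<k}\rangle$. Such a level is a structural boundary in the sense of the definition in Section~5. Hence failure of finite termination is subsumed under boundary formation, and the complement of alternative~(2) is exactly finite termination without boundary, which is the strongly admissible regime.

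\textbf{Step 2: each branch.} In the strongly admissible case, the hypotheses of Theorem~\ref{thm:factorization} hold. Corollary~\ref{cor:bethe-solvability} then gives Bethe-type exact solvability for every representation $\pi:\mathcal A\to\mathrm{End}(V)$ within the applicability regime, which is alternative~(1). In the other case, Theorem~\ref{thm:boundary-obstruction} applies directly with $k$ the minimal boundary index from Step~1, and yields obstruction in every representation-theoretic realization, which is alternative~(2).

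\textbf{Step 3: exclusivity.} I will record that the alternatives cannot both hold. Alternative~(1) produces a finite presentation $\mathcal A=\langle S\mid R\rangle$, whereas the proof of Theorem~\ref{thm:boundary-obstruction} shows that no such presentation exists once a boundary is present.

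I expect the main obstacle to be Step~1. Functoriality of the extraction has to guarantee that the observable algebra's filtration is literally the defect filtration, so that ``boundary in $\mathcal P$'' and ``boundary in $\mathcal A$'' agree. The argument also has to cover the non-terminating case without circularity. I will handle both by invoking the Section~2 identification with strong admissibility (from \cite{GildeaAPT7}) as the bridge rather than reproving it.
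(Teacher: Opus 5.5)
Your proposal is correct and follows essentially the paper's own argument. It uses a two-case split into the strongly admissible regime (finite termination without boundary), handled by Theorem~\ref{thm:factorization} and Corollary~\ref{cor:bethe-solvability}, and the boundary regime, handled directly by Theorem~\ref{thm:boundary-obstruction}. The paper merely asserts that the two cases exhaust all possibilities, so your Step~1 (folding a non-stabilizing filtration into boundary formation, resting as the paper does on the Section~2 identification) makes explicit what the paper leaves implicit, and your Step~3 exclusivity check is harmless but not required by the statement.
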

\begin{proof}
Let $\mathcal A$ be an observable algebra equipped with its canonical
interaction-depth filtration
\[
\mathcal A_0 \subseteq \mathcal A_1 \subseteq \mathcal A_2 \subseteq \cdots .
\]
By functoriality, this filtration exhausts all interaction data generated by
iterated composition and defect propagation. We therefore distinguish two mutually exclusive possibilities.

\medskip
\noindent
\emph{Case 1: Finite termination without boundary.} Suppose there exists $N<\infty$ such that $\mathcal A_N=\mathcal A_{N+1}=\cdots$
and no structural boundary occurs at any level $k\le N$. By
Theorem~\ref{thm:factorization}, all interaction data of depth greater than $N$
factor functorially through $\mathcal A_N$. Hence the global interaction structure of $\mathcal A$ is generated by finitely
many lower-order components together with finitely many compatibility relations. It then follows from
Corollary~\ref{cor:bethe-solvability} that this finite control yields Bethe-type exact solvability
in representation-theoretic realizations of $\mathcal A$ within the applicability regime.

\medskip
\noindent
\emph{Case 2: Existence of a structural boundary.} Suppose instead that a structural boundary occurs at some minimal level $k$.
Then $\mathcal A_k$ contains interaction data not functorially generated from
$\mathcal A_{k-1}$ and is therefore irreducible with respect to bounded-depth
factorization. By Theorem~\ref{thm:boundary-obstruction}, Bethe-type exact
solvability is obstructed in representation-theoretic realizations of
the associated observable algebra $\mathcal A$, since any such solution would necessarily omit intrinsic
interaction data appearing at depth $k$.

These two cases exhaust all possibilities. The result follows.
\end{proof}

This dichotomy captures the two structural regimes governing solvability. 
There is no continuous transition in the structural mechanism between solvable and non-solvable behavior: exact
solvability either holds within the applicability regime
or fails once structural boundaries arise.  

From this perspective, integrable models appear as isolated rigidity islands
within the landscape of interacting many-body systems. Their solvability is not
an accident of clever construction, but a necessary consequence of constrained
interaction propagation. Conversely, generic interacting systems fail to be
exactly solvable because their interaction structure inevitably generates new
independent data at unbounded depth.

The corollary thus provides a structural answer to the long-standing puzzle of
why Bethe-type methods are both extraordinarily powerful and sharply limited.
They succeed in regimes where structure enforces finite interaction propagation
and fail once structural boundaries arise.

\section{Core Classification Results}

We now establish the central structural classification underlying the theory.
The goal of this section is to identify the precise condition under which
Bethe-type exact solvability occurs in quantum many-body systems, formulated
entirely in terms of the intrinsic structure of the extracted algebraic phase.

The main result shows that exact solvability is equivalent to strong admissibility,
i.e.\ finite stabilization of the canonical defect filtration without the occurrence
of structural boundaries. In particular, given a quantum many-body system, one may
decide Bethe-type solvability by extracting its algebraic phase and verifying this
structural condition. This provides a complete structural characterization of
Bethe-type solvability, independent of analytic or model-specific considerations.

The argument proceeds in three steps. First, we show that failures of finite
generation in the interaction algebra cannot be hidden by passing to representations,
via a faithful realization. We then prove the classification theorem establishing
the equivalence between solvability, strong admissibility, and finite-depth
propagation. Finally, we reformulate this condition in purely algebraic terms as
a finite-presentation property of the interaction algebra.

\begin{lemma}\label{lem:faithful-realization}
Let $\mathcal{P}$ be an extracted algebraic phase and let $\mathcal A(\mathcal{P})$
denote its associated interaction algebra. Suppose $\mathcal A(\mathcal{P})$
admits a faithful representation
\[
\lambda:\mathcal A(\mathcal{P})\longrightarrow \mathrm{End}(V).
\]
Let $\{x_\alpha\}_{\alpha\in I}\subseteq \mathcal A(\mathcal{P})$ be a family of interaction elements. Then a finite bounded-depth subset $S\subseteq \mathcal A(\mathcal{P})$ generates the family
$\{x_\alpha\}_{\alpha\in I}$ if and only if $\lambda(S)$ generates the family
$\{\lambda(x_\alpha)\}_{\alpha\in I}$. In particular, the family $\{x_\alpha\}_{\alpha\in I}$ is finitely generated by bounded-depth elements
if and only if the family $\{\lambda(x_\alpha)\}_{\alpha\in I}$ is.
\end{lemma}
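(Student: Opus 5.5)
The plan is to reduce the lemma to the elementary fact that an injective algebra homomorphism is an isomorphism onto its image, and that generation commutes with homomorphisms. Throughout, for a subset $T$ of an algebra, $\langle T\rangle$ denotes the subalgebra generated by $T$ under the algebra operations, as in the proof of Theorem~\ref{thm:factorization}. "Generates the family" means that each $x_\alpha$ lies in $\langle S\rangle$, possibly together with the induced compatibility relations. Because $\lambda$ is an algebra homomorphism, these relations are carried to relations among the images.

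The first key step is to establish the identity
\[
\lambda(\langle S\rangle)=\langle \lambda(S)\rangle .
\]
I would prove this by induction on the length of the word, or expression, in elements of $S$ that represents an element of $\langle S\rangle$.
\begin{itemize}
\item The base case is $\lambda$ applied to an element of $S$ itself.
\item The inductive step uses that $\lambda$ preserves sums, scalar multiples and products.
\end{itemize}
The inclusion $\supseteq$ holds because $\lambda(\langle S\rangle)$ is a subalgebra of $\mathrm{End}(V)$ that contains $\lambda(S)$.

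The forward direction is then immediate. If $x_\alpha\in\langle S\rangle$, then $\lambda(x_\alpha)\in\lambda(\langle S\rangle)=\langle\lambda(S)\rangle$.

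For the converse, suppose $\lambda(x_\alpha)\in\langle\lambda(S)\rangle$. By the identity there is some $y\in\langle S\rangle$ with $\lambda(y)=\lambda(x_\alpha)$. Faithfulness, meaning injectivity of $\lambda$, then forces $x_\alpha=y\in\langle S\rangle$. This is the only place where faithfulness is used, and it is essential: for a non-faithful $\lambda$ the images could collapse and become finitely generated even when the originals are not.

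The final "in particular" clause follows by quantifying over finite bounded-depth $S$. Given such an $S$ generating the images, I take $S'=S$. In the other direction, if some finite set $T\subseteq\mathrm{End}(V)$ generates the $\lambda(x_\alpha)$, the natural reading is that $T=\lambda(S)$ for a finite bounded-depth $S\subseteq\mathcal A(\mathcal P)$. I would make this convention explicit. Bounded depth is not a property of arbitrary operators in $\mathrm{End}(V)$; it is transported along $\lambda$ from the filtration on $\mathcal A(\mathcal P)$. So "bounded-depth elements" on the represented side must mean images of bounded-depth elements.

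The main obstacle is exactly this interpretive point, together with the treatment of the "compatibility relations". If generation is taken relative to relations $R$, I must check that $\lambda$ neither creates nor destroys relations. The plan is to observe that any relation holding among the $\lambda(s)$ pulls back under injectivity to the same relation among the $s$, and conversely. The relational structure is therefore identical on both sides, since $\lambda$ restricts to an isomorphism $\mathcal A(\mathcal P)\cong\lambda(\mathcal A(\mathcal P))$.
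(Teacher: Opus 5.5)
Your proposal is correct and follows essentially the same argument as the paper: the forward direction holds because $\lambda$ is a homomorphism, and the converse lifts an expression for $\lambda(x)$ in $\lambda(S)$ to some $y\in\langle S\rangle$ with $\lambda(y)=\lambda(x)$, so that injectivity gives $x=y$. Your explicit convention that ``bounded-depth'' on the represented side means images $\lambda(S)$ of bounded-depth subsets $S$ is the reading the paper tacitly uses in its ``in particular'' clause, since depth is not defined for arbitrary operators in $\mathrm{End}(V)$; stating it makes the argument more precise without changing it.
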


\begin{proof}
Let $S\subseteq \mathcal A(\mathcal{P})$ be a finite bounded-depth subset, and
write $\langle S\rangle$ for the subalgebra it generates. First suppose that $x\in \langle S\rangle$. Since $\lambda$ is an algebra
homomorphism, applying $\lambda$ to any algebraic expression in the elements of
$S$ gives the corresponding algebraic expression in the elements of $\lambda(S)$.
Therefore
\[
x\in \langle S\rangle \quad \Rightarrow \quad \lambda(x)\in \langle \lambda(S)\rangle.
\]

Now suppose instead that $\lambda(x)\in \langle \lambda(S)\rangle$. Then
$\lambda(x)$ is equal to a finite algebraic expression in the elements
$\lambda(S)$. Because $\lambda$ is a homomorphism, that same algebraic
expression comes from some element $y\in \langle S\rangle$, so that
\[
\lambda(x)=\lambda(y).
\]
Since $\lambda$ is faithful, it is injective. Hence $x=y$, and therefore
$x\in \langle S\rangle$. We have shown that, for every $x\in \mathcal A(\mathcal{P})$, membership in the
subalgebra generated by $S$ is preserved exactly under $\lambda$: an element
lies in $\langle S\rangle$ precisely when its image lies in
$\langle \lambda(S)\rangle$.

Now consider the family $\{x_\alpha\}_{\alpha\in I}$. If $S$ generates this
family in $\mathcal A(\mathcal{P})$, then each $x_\alpha$ lies in
$\langle S\rangle$. By the first part of the proof, each $\lambda(x_\alpha)$
then lies in $\langle \lambda(S)\rangle$, so $\lambda(S)$ generates the family
$\{\lambda(x_\alpha)\}_{\alpha\in I}$.

Conversely, if $\lambda(S)$ generates the family
$\{\lambda(x_\alpha)\}_{\alpha\in I}$, then each $\lambda(x_\alpha)$ lies in
$\langle \lambda(S)\rangle$. By the second part of the proof, each $x_\alpha$
then lies in $\langle S\rangle$, so $S$ generates the family
$\{x_\alpha\}_{\alpha\in I}$. Therefore a finite bounded-depth subset $S\subseteq \mathcal A(\mathcal{P})$
generates the family $\{x_\alpha\}_{\alpha\in I}$ if and only if $\lambda(S)$
generates the family $\{\lambda(x_\alpha)\}_{\alpha\in I}$. In particular, the family $\{x_\alpha\}_{\alpha\in I}$ is finitely generated by
bounded-depth elements exactly when the family
$\{\lambda(x_\alpha)\}_{\alpha\in I}$ is.
\end{proof}

\begin{theorem}\label{thm:classification}
Let $D$ be a quantum many-body interaction domain admitting an extracted
algebraic phase $(\mathcal{P},\circ)$ in the sense of Algebraic Phase Theory,
and suppose that the associated observable algebra $\mathcal A(\mathcal{P})$
admits a faithful representation within the applicability regime.
Then the following are equivalent:
\begin{enumerate}
    \item Representation-theoretic realizations of $\mathcal A(\mathcal{P})$ exhibit Bethe-type exact solvability within the applicability regime.
    \item The extracted algebraic phase $\mathcal{P}$ is strongly admissible.
    \item The canonical defect filtration of $\mathcal{P}$ stabilizes after finite depth
    and exhibits no structural boundary.
\end{enumerate}
\end{theorem}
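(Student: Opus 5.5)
I would prove the equivalences by the cycle $(2)\Leftrightarrow(3)$, $(3)\Rightarrow(1)$, $(1)\Rightarrow(3)$. The first equivalence is essentially definitional. By the identification of Section~2 (the subsection on strong admissibility), finite termination of interaction propagation corresponds to finite defect rank. Absence of structural boundary corresponds to strong admissibility. So the strongly admissible regime is exactly the regime in which the canonical defect filtration stabilizes at some $N<\infty$ with no boundary at any level $k\le N$. I would state this as a direct appeal to that identification, together with the applicability criterion of \cite{GildeaAPT7}, and not reprove it.

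For $(3)\Rightarrow(1)$, I would pass from the defect filtration of $\mathcal P$ to the interaction-depth filtration of $\mathcal A(\mathcal P)$, again via the Section~2 identification. Then I apply Theorem~\ref{thm:factorization}: stabilization at depth $N$ without boundary gives a finite set $S\subset\mathcal A_1$ with $\mathcal A(\mathcal P)=\mathcal A_N\subseteq\langle S\rangle$, subject to finitely many compatibility relations $R$. Corollary~\ref{cor:bethe-solvability} then transports the presentation $\langle S\mid R\rangle$ to any representation $\pi$. This gives the finite parametrization that the paper takes as the definition of Bethe-type solvability within the applicability regime. This direction is just a chain of earlier results, and faithfulness is not needed.

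For $(1)\Rightarrow(3)$, I argue by contraposition in two cases, using the faithful representation $\lambda$ from the hypothesis.

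\emph{Case (a): the filtration does not stabilize.} Then Theorem~\ref{thm:finite-termination-necessary} shows that the global interaction structure is not exactly structurally solvable. Infinitely many independent elements $x_n$ appear at unbounded depth.

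\emph{Case (b): it stabilizes but a structural boundary occurs at a minimal level $k$.} Then the proof of Theorem~\ref{thm:boundary-obstruction} supplies a family $\{x_\alpha\}\subset\mathcal A_k$ that is not generated by any finite subset of bounded depth.

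In either case I then invoke Lemma~\ref{lem:faithful-realization}: a finite bounded-depth set $\lambda(S)$ generates $\{\lambda(x_\alpha)\}$ only if $S$ generates $\{x_\alpha\}$. The obstruction therefore persists in $\lambda(\mathcal A(\mathcal P))\subseteq\mathrm{End}(V)$. Hence the faithful realization, which is a representation-theoretic realization within the applicability regime, admits no finite parametrization, contradicting (1).

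The main obstacle is this last step. The difficulty is that any finite $S$ which generates $\lambda(\mathcal A)$ must be pulled back to a \emph{bounded-depth} subset of $\mathcal A$. Lemma~\ref{lem:faithful-realization} is stated for $S\subseteq\mathcal A(\mathcal P)$ of bounded depth, so I must argue that generators of the image can be chosen in $\lambda(\mathcal A)$. By faithfulness each such generator is $\lambda(s)$ for a unique $s\in\mathcal A$. A finite set of $s$ automatically lies in some $\mathcal A_m$, which handles the stabilized case (b).

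A second, subtler point is that the relations $R$ must also be pulled back. For this I would use injectivity of $\lambda$ again: a relation among the $\lambda(s)$ holds if and only if the corresponding relation holds among the $s$. So finite presentations correspond exactly under $\lambda$, and this is precisely where faithfulness is indispensable. Without it, a quotient representation could hide the boundary data.
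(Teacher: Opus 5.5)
Your proposal is correct within the paper's framework and follows the paper's own proof: $(2)\Leftrightarrow(3)$ by definition via the Section~2 identification, $(3)\Rightarrow(1)$ from Theorem~\ref{thm:factorization} and Corollary~\ref{cor:bethe-solvability} applied to an arbitrary representation, and $(1)\Rightarrow(3)$ by contraposition, transporting the failure of a finite bounded-depth presentation to the faithful representation $\lambda$ via Lemma~\ref{lem:faithful-realization}. The only additions are that you name which earlier results (Theorems~\ref{thm:finite-termination-necessary} and~\ref{thm:boundary-obstruction}) supply the obstruction in the two sub-cases, which the paper asserts in one line, and that you spell out how injectivity of $\lambda$ pulls back generators and relations of the image.
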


\begin{proof}
We first observe that statements (2) and (3) are equivalent by definition:
strong admissibility of $\mathcal{P}$ is precisely the condition that its
canonical defect filtration stabilizes after finite depth and exhibits no
structural boundary. It therefore suffices to show that statement (1) is equivalent to statement (3).

\medskip
\noindent
\emph{Finite-depth propagation implies solvability.}
Assume that the canonical defect filtration of $\mathcal{P}$ stabilizes after
finite depth and exhibits no structural boundary. Then all interaction data are
controlled by bounded defect depth, and higher-depth structure is functorially
generated from lower-depth data. By the factorization and rigidity results established earlier
(Theorem~\ref{thm:factorization} and Corollary~\ref{cor:bethe-solvability}),
this finite control implies that the associated interaction algebra admits a
finite bounded-depth presentation. Applying any representation
\[
\pi:\mathcal A(\mathcal{P})\to \mathrm{End}(V)
\]
preserves this finite presentation, so the represented algebra
$\pi(\mathcal A(\mathcal{P}))$ admits a finite bounded-depth description.
This is precisely Bethe-type exact solvability. Hence (1) holds.

\medskip
\noindent
\emph{Failure of finite-depth propagation obstructs solvability.}
Conversely, suppose that the canonical defect filtration either does not
stabilize or exhibits a structural boundary. In either case, the associated interaction algebra does not admit a finite
presentation by bounded-depth generators and finitely many functorially induced
relations. Let
\[
\lambda:\mathcal A(\mathcal{P})\to \mathrm{End}(V)
\]
be a faithful representation in the applicability regime. By
Lemma~\ref{lem:faithful-realization}, this failure persists under $\lambda$.
Thus the represented algebra $\lambda(\mathcal A(\mathcal{P}))$ also fails to
admit a finite bounded-depth presentation, so Bethe-type exact solvability
fails in this realization. Therefore (1) cannot hold.

\medskip
This proves that (1) and (3) are equivalent, and hence all three statements are equivalent.
\end{proof}

\begin{theorem}\label{thm:finite-presentation}
Let $(\mathcal{P},\circ)$ be an extracted algebraic phase.
Then $\mathcal{P}$ is strongly admissible if and only if the associated interaction
algebra admits a finite presentation by generators of bounded defect depth and
finitely many functorially induced relations.
\end{theorem}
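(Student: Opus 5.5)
The proof splits into the two implications, and both rest on results already stated. Strong admissibility is, by Section~2 and the proof of Theorem~\ref{thm:classification}, the statement that the canonical defect filtration stabilizes at some finite $N$ and has no structural boundary. So it suffices to show that this condition is equivalent to the interaction algebra $\mathcal A(\mathcal{P})$ having a finite presentation $\langle S\mid R\rangle$. Here $S$ should lie in a bounded filtration level $\mathcal A_m$, and $R$ should be finite and functorially induced.

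\emph{Forward direction.} I would apply Theorem~\ref{thm:factorization} directly. Assume $\mathcal{P}$ is strongly admissible. The inductive step in that proof gives $\mathcal A_k\subseteq\langle \mathcal A_{k-1},\Omega_k\rangle$ for $k\le N$, with each $\Omega_k$ finite and induced from lower depth. This yields a finite set $S\subset\mathcal A_1$ with $\mathcal A=\mathcal A_N\subseteq\langle S\rangle$, so the generators have defect depth at most $1$. For the relations, I would collect the compatibility relations produced at each of the finitely many inductive steps $k=1,\dots,N$. Each $\Omega_k$ is finite and each operation contributes finitely many relations, so $R$ is a finite union of finite sets. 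These relations are functorially induced because they come from the defect operation $\partial$ and the operation $\circ$. Stabilization then ensures that depths above $N$ contribute no new relations, which gives the finite presentation.

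\emph{Reverse direction.} I would argue by contraposition, following the proof of Theorem~\ref{thm:boundary-obstruction}. Suppose $\mathcal{P}$ is not strongly admissible. There are two cases.
\begin{enumerate}
\item If the filtration never stabilizes, then Theorem~\ref{thm:finite-termination-necessary} gives elements $x_n\in\mathcal A_n\setminus\langle\mathcal A_{<n}\rangle$ for every $n$. Any finite $S$ lies in some $\mathcal A_m$, so $x_{m+1}\notin\langle S\rangle$ and $S$ cannot generate $\mathcal A$.
\item If there is a structural boundary at a minimal level $k$, then $\mathcal A_k$ contains elements that are not functorially generated from $\mathcal A_{k-1}$. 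The argument of Theorem~\ref{thm:boundary-obstruction} then rules out any finite $\langle S\mid R\rangle$ with functorially induced $R$.
\end{enumerate}
In either case no finite bounded-depth presentation exists.

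\emph{Main obstacle.} The difficult step is case (ii) when the boundary level $k$ is small and a finite $S$ might already reach $\mathcal A_k$. A new generator at depth $k$ could then simply be added to $S$, and finite generation would not fail. The condition that needs handling is the restriction that the relations be \emph{functorially induced}. I would first try to show that the new generators at a boundary are, by definition, not tied to lower-depth data by any functorially induced relation. A presentation $\langle S\mid R\rangle$ with such $R$ would then force these generators to be determined functorially by $\mathcal A_{k-1}$, which contradicts the definition of a structural boundary. If that argument turns out to be insufficient, I would fall back on the infinite family $\{x_\alpha\}$ used in Theorem~\ref{thm:boundary-obstruction}. Since that family is infinite, no finite subset of $\mathcal A_k$ can generate it, and this would close the argument.
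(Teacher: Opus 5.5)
\textbf{Verdict: genuine gap in case~(ii) of the reverse direction.}

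\textbf{Forward direction and case~(i).} Your forward direction is the paper's argument: a finite $S$ in depth~$1$, finitely many relations $R_k$ for $k\le N$, and $R=\bigcup_k R_k$. There is one caveat. The inclusions $\mathcal A_k\subseteq\langle\mathcal A_{k-1},\Omega_k\rangle$ only reduce everything to all of $\mathcal A_1$ plus finitely many operations, so that induction does not make $S$ finite. The paper gets finiteness by explicitly assuming in this proof that $\mathcal{P}_1$ is of finite type (see its footnote). Your case~(i) is fine at the paper's level of rigor. It uses the same identification of non-termination with new generators at every depth as the proof of Theorem~\ref{thm:finite-termination-necessary}.

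\textbf{The gap.} You flag case~(ii) but do not close it, and neither proposed closure works.
\begin{enumerate}
\item Route~(a) fails exactly in the situation you describe. Suppose the boundary level $k$ is at most the depth bound $m$ of $S$, and $S$ contains one of the new depth-$k$ generators $s$. The presentation $\langle S\mid R\rangle$ does not make $s$ functorially determined by $\mathcal A_{k-1}$: $s$ is simply a generator. Requiring $R$ to be functorially induced constrains relations, not generators, so there is no contradiction with the definition of a boundary.
\item Route~(b) is a non sequitur. An infinite family can be generated by a finite set. The family in Theorem~\ref{thm:boundary-obstruction} is only required to escape finite subsets of $\mathcal A_{<k}$. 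That proof reaches arbitrary finite $S$ only by asserting $S\subset\mathcal A_m$ with $m<k$, which is the very point at issue.
\end{enumerate}

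\textbf{How the paper handles it.} The paper does not argue by contraposition or go through Theorem~\ref{thm:boundary-obstruction}. It argues directly: every datum is a word in the bounded-depth generators $S$ modulo the functorially induced $R$, so it is obtained from bounded-depth data. It then concludes that no datum arises independently of lower-depth data, so there is no boundary by definition.

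That last inference is the same passage from \emph{depth at most $m$} to \emph{depth below $k$} that you identified as problematic. So the paper's proof does not really dispose of your obstacle either. It absorbs it by reading \emph{no structural boundary} as \emph{functorially generated from a finite bounded-depth set}.

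\textbf{What you need to add.} To complete case~(ii), do one of the following.
\begin{enumerate}
\item Adopt that reading explicitly. The reverse direction is then immediate, as in the paper.
\item Require the generators to lie strictly below any candidate boundary level, for instance $S\subseteq\mathcal A_1$, which is exactly what your forward direction produces. Then route~(a) goes through verbatim for $k\ge 2$: every element of $\mathcal A_k$ is a word in elements of $\mathcal A_{k-1}$ modulo functorially induced relations, which contradicts a boundary at level $k$.
\end{enumerate}
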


\begin{proof}
Let $(\mathcal{P},\circ)$ be an extracted algebraic phase and let $\mathcal A(\mathcal{P})$
denote the associated interaction algebra. Write
\[
\mathcal{P}_0 \subseteq \mathcal{P}_1 \subseteq \mathcal{P}_2 \subseteq \cdots
\]
for the canonical defect filtration.

\medskip
\noindent
\emph{($\Rightarrow$).} Assume $\mathcal{P}$ is strongly admissible, so the defect filtration stabilizes at some
finite depth $N$ and no structural boundary occurs. Assume moreover that the depth-$1$ interaction sector is of finite type, i.e.\
$\mathcal{P}_1$ is generated (under the extracted phase operations) by a finite set of
primitive interaction elements.\footnote{This finite-type hypothesis can be
taken as part of the definition of the many-body interaction domains considered
in this paper.}

Choose a finite generating set $S\subseteq \mathcal{P}_1$. Since no structural boundary
occurs, for each $1\le k\le N$ the depth-$k$ sector is functorially generated
from lower depth, with only finitely many functorially induced compatibility
relations at that depth. Let $R_k$ be a finite set of relations encoding these
depth-$k$ compatibilities. Set
\[
R:=\bigcup_{k=1}^N R_k,
\]
which is finite. By induction on $k\le N$, every element of $\mathcal{P}_k$ (hence of $\mathcal{P}=\mathcal{P}_N$)
lies in the subalgebra of $\mathcal A(\mathcal{P})$ generated by $S$, and all relations
among such generated elements are consequences of $R$. Therefore $\mathcal A(\mathcal{P})$ admits a
finite presentation
\[
\mathcal A(\mathcal{P})\;\cong\;\langle\, S \mid R \,\rangle,
\]
where $S$ has bounded defect depth (indeed depth $1$) and $R$ consists of finitely
many functorially induced relations.

\medskip
\noindent
\emph{($\Leftarrow$).} Conversely, suppose
\[
\mathcal A(\mathcal{P})\;\cong\;\langle\, S \mid R \,\rangle,
\]
where $S$ is finite, each generator in $S$ has defect depth at most $m$, and $R$
is a finite set of functorially induced relations. Since $\mathcal A(\mathcal{P})$ is generated by $S$, every element of $\mathcal A(\mathcal{P})$ is
represented by a word in the generators $S$ modulo the relations $R$. In
particular, all interaction data in $\mathcal{P}$ are controlled by bounded-depth
primitives (depth $\le m$) together with finitely many functorial compatibilities
(encoded by $R$). Hence the canonical defect filtration cannot produce
genuinely new independent interaction data at arbitrarily large depth: the
interaction structure is finitely controlled, so the filtration stabilizes after
finite depth.

It remains to show that no structural boundary occurs. Let $k\geq 1$ and let
$x$ be any interaction datum of depth $k$. Since
\[
\mathcal A(\mathcal{P})\cong \langle\, S \mid R \,\rangle,
\]
the element $x$ is represented by a word in the generators $S$, modulo the
relations $R$. Because every generator in $S$ has defect depth at most $m$ and
every relation in $R$ is functorially induced, the depth-$k$ interaction datum
$x$ is obtained from bounded-depth data by iterated composition together with
those same functorial compatibilities.

Thus every depth-$k$ interaction datum is obtained from bounded-depth data by
iterated composition together with the same functorial compatibility relations.
In particular, no depth-$k$ interaction datum arises independently of lower-depth
data. By definition, a structural boundary at depth $k$ would mean the existence of
interaction data at that depth that are \emph{not} functorially determined from
lower depth. Since this never occurs, no structural boundary can appear.

Moreover, because the interaction structure is generated from finitely many
bounded-depth elements using finitely many relations, no genuinely new interaction
data can appear at arbitrarily large depth. It follows that the canonical defect
filtration stabilizes after finite depth. Therefore the filtration stabilizes after finite depth and exhibits no structural
boundary. This is precisely the definition of strong admissibility, so
$\mathcal{P}$ is strongly admissible.
\end{proof}

\section{Bridge to $R$-Matrix Integrability: the first three-body stage and Yang-Baxter}

Under the identification with strong admissibility in APT (Section~2), the interaction-depth filtration used throughout this paper coincides with the canonical defect filtration of the extracted phase. In particular, the absence of a structural boundary at the first genuinely three-body level means that, when passing from two-body primitives (depth $1$) to the first nontrivial three-body composites (depth $2$ on three tensor factors), the propagation introduces no new independent generator beyond the depth-$1$ data together with the functorially induced compatibilities.

The aim of this section is to connect this structural condition with the standard integrability literature. In the familiar setting where two-body interaction is encoded by an invertible $R$-matrix, a canonical compatibility condition governing the first three-body stage is the Yang--Baxter equation. We formulate this precisely below, distinguishing what is forced by the structural boundary condition from what requires an additional (standard) generation hypothesis.

\subsection{$R$-Matrix Realizations, Depth, and Three-Body Closure}

Let $\mathcal A = \mathcal A(\mathcal P)$ be the observable algebra
associated to an extracted phase $\mathcal P$, equipped with its
interaction-depth filtration $\{\mathcal A_k\}_{k\ge 0}$.

\begin{definition}
\label{def:R-matrix-realization}
An \emph{$R$-matrix realization} of the depth-$1$ interaction sector of
$\mathcal A(\mathcal P)$ consists of a vector space $V$ and an invertible operator
\[
R \in \mathrm{End}(V\otimes V),
\]
such that, for each $n\ge 2$, the represented depth-$1$ interaction data on
$V^{\otimes n}$ are generated, as a subalgebra of $\mathrm{End}(V^{\otimes n})$, by the operators
\[
R_{i,i+1} :=
\mathrm{id}^{\otimes (i-1)} \otimes R \otimes \mathrm{id}^{\otimes (n-i-1)}
\in \mathrm{End}(V^{\otimes n})
\qquad (1\le i\le n-1),
\]
together with the natural tensor-product functoriality.
\end{definition}

For $n=3$ we write
\[
R_{12} := R\otimes \mathrm{id},\qquad
R_{23} := \mathrm{id}\otimes R
\qquad\text{in }\mathrm{End}(V^{\otimes 3}),
\]
and we define the non-adjacent copy by conjugation with the flip:
let $P_{23}\in\mathrm{End}(V^{\otimes 3})$ be the swap of the last two tensor
factors, and set
\[
R_{13} := P_{23}\,R_{12}\,P_{23}\in \mathrm{End}(V^{\otimes 3}).
\]

\begin{definition}
\label{def:first-threebody-algebra}
Let $V$ and $R$ be as above. Define the subalgebra of
$\mathrm{End}(V^{\otimes 3})$ generated by $R_{12}, R_{13}, R_{23}$ by
\[
\mathcal B_1(R) := \langle R_{12}, R_{13}, R_{23} \rangle
\subset \mathrm{End}(V^{\otimes 3}).
\]
\end{definition}

Intuitively, $\mathcal B_1(R)$ consists of all three-body operators obtained by
iterated composition of pairwise copies of the same two-body primitive $R$.
The first place where a new intrinsically three-body datum can appear is the
first genuinely three-body stage, namely depth $2$ on $V^{\otimes 3}$.

To make the bridge statement precise, we isolate the relevant notion of
boundary-freeness at the first three-body level in the $R$-matrix setting.

\begin{definition}
\label{def:threebody-closure}
In an $R$-matrix realization, we say that the \emph{first three-body stage is
boundary-free} (or that \emph{three-body closure holds at depth $2$}) if
passing from depth $1$ to the first three-body level introduces no new
independent generator: equivalently, the represented depth-$2$ interaction data
on $V^{\otimes 3}$ are determined by the depth-$1$ data
$\{R_{12},R_{13},R_{23}\}$ together with functorially induced compatibility
relations, and do not force the introduction of an additional, intrinsically
three-body generator.
\end{definition}

\begin{remark}
Definition~\ref{def:threebody-closure} is the concrete specialization, inside
an $R$-matrix realization, of the general notion ``no structural boundary at
the first genuinely three-body level'' used elsewhere in the paper.  It is the
depth-$3$ analogue (on three tensor factors) of the statement that propagation
from bounded depth introduces compatibilities but not new generators.
\end{remark}

\subsection{Yang-Baxter as the canonical three-body compatibility}

There are two canonical ways to assemble a three-body interaction from a single
two-body primitive, corresponding to the two natural orderings of iterated
pairwise interactions on three tensor factors. These are given explicitly as follows.

\begin{definition}
\label{def:YB-defect}
Define the two triple assemblies on $V^{\otimes 3}$ by
\[
T_L := R_{12} R_{13} R_{23}, \qquad
T_R := R_{23} R_{13} R_{12},
\]
and define the \emph{Yang--Baxter defect} of $R$ to be
\[
\Delta_R := T_L - T_R \in \mathrm{End}(V^{\otimes 3}).
\]
\end{definition}

\begin{proposition}
\label{prop:no-boundary-implies-ybe}
In an $R$-matrix realization, if the first three-body stage is boundary-free in
the sense of Definition~\ref{def:threebody-closure}, then the Yang--Baxter
equation holds:
\[
R_{12} R_{13} R_{23} = R_{23} R_{13} R_{12}
\quad \text{on } V^{\otimes 3}.
\]
Equivalently, boundary-freeness implies $\Delta_R = 0$.
\end{proposition}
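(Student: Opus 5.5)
The plan is to argue by contraposition: assume $\Delta_R\neq 0$ and show that this forces exactly the kind of intrinsically three-body datum that Definition~\ref{def:threebody-closure} excludes. The two triple assemblies $T_L$ and $T_R$ of Definition~\ref{def:YB-defect} are the two canonical depth-$2$ three-body composites of the single primitive $R$. Each applies every pairwise copy $R_{12},R_{13},R_{23}$ exactly once, and they differ only in the order of pairwise interaction. So the first step is to record that both lie in the depth-$2$ sector on $V^{\otimes 3}$. Both are built from the depth-$1$ data $\{R_{12},R_{13},R_{23}\}$ alone, with $R_{13}$ obtained by the functorial conjugation by $P_{23}$.

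The second step is to make explicit what ``determined by the depth-$1$ data together with functorially induced compatibility relations'' must mean for these two elements. I would read it as follows: the depth-$2$ three-body datum obtained by letting all three pairs interact through the same primitive is a single well-defined element. It cannot depend on an extra choice, namely the ordering, that is not itself part of the depth-$1$ data. Concretely, the natural tensor-product functoriality of Definition~\ref{def:R-matrix-realization} acts on the three factors by relabelling. Under the order-reversing relabelling, $T_L$ corresponds to $T_R$. Consistency of the induced depth-$2$ datum under this functoriality therefore demands $T_L=T_R$ modulo the induced compatibilities.

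The third step is to show that if $\Delta_R\neq 0$, the discrepancy $\Delta_R$ is not removable by any relation induced from depth $1$. Depth-$1$ relations only involve two tensor factors at a time, such as invertibility of $R$ and the relations between $R_{i,i+1}$ and their flips. No such relation can identify two words in which all three factors are genuinely coupled. Hence the ordering choice would have to be recorded as an independent three-body generator. That is precisely a structural boundary at the first genuinely three-body stage, contradicting boundary-freeness, so $\Delta_R=0$, which is the Yang--Baxter equation.

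The main obstacle is the third step. $\Delta_R$ visibly lies in $\mathcal B_1(R)$, so ``new generator'' cannot mean ``outside the generated subalgebra''. The argument must instead rest on the well-definedness, or ordering-independence, reading of Definition~\ref{def:threebody-closure}. I would try first to formalize this as follows: boundary-freeness says the depth-$2$ datum is a function of the depth-$1$ data equivariant under functorial relabelling of the factors, and equivariance alone then forces $T_L=T_R$.
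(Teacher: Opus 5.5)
Your underlying reading of Definition~\ref{def:threebody-closure} is the one the paper uses. You are also right that the literal ``generated by'' reading cannot work, since $T_L$, $T_R$ and $\Delta_R$ lie in $\mathcal B_1(R)$ for every $R$. The paper's proof simply takes boundary-freeness to mean that depth-$2$ composites assembled from the same depth-$1$ data must agree as operators, and applies this to $T_L$ and $T_R$. Your ordering-independence reading in Step~2 already gives $T_L=T_R$ in exactly this way. The contraposition and Step~3 are not needed: if $\Delta_R\neq 0$, no valid operator identity identifies $T_L$ with $T_R$, and whether that discrepancy counts as a ``new generator'' is once more just the definitional reading.

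The gap is in the mechanism you propose for formalizing that reading, which you single out as the crux.

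First, the order-reversing relabelling does not carry $T_L$ to $T_R$. Conjugation by the flip $P_{13}$ of the outer factors sends $R_{12},R_{13},R_{23}$ to $R_{32},R_{31},R_{21}$, where $R_{ji}$ places the first slot of $R$ on factor $j$. Hence $P_{13}T_LP_{13}=R_{32}R_{31}R_{21}$. This agrees with $T_R$ when $PRP=R$ for the flip $P$ on $V\otimes V$, but differs from it for generic $R$. Indeed, treating $R_{ij}$ and $R_{ji}$ as distinct symbols, no permutation of the three factors sends the word $R_{12}R_{13}R_{23}$ to the word $R_{23}R_{13}R_{12}$. $T_R$ is the reversed word, the image of $T_L$ under an anti-automorphism rather than a relabelling. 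The relabelling instead carries the Yang--Baxter equation for $R$ to the one for $PRP$.

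Second, even when $PRP=R$, equivariance alone forces nothing. Conjugation by $P_{13}$ is an algebra automorphism, so every word in the pairwise copies, $T_L$ included, is automatically equivariant once its inputs are transported along. What you would actually need is invariance under permuting the inputs, i.e.\ that the datum depend only on the unordered set $\{R_{12},R_{13},R_{23}\}$. That is precisely the ordering-independence you set out to derive, so the argument is circular.

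So drop the relabelling argument and invoke the reading directly, as the paper does. When you do, restrict it to the two canonical assemblies of Definition~\ref{def:YB-defect}, the restriction made explicit in Definition~\ref{def:pairwise-generation}. Full ordering-independence would also demand, e.g., $R_{13}R_{12}R_{23}=T_L$. This fails for the basic Yang--Baxter solution $R=P$ when $\dim V\ge 2$, since $P_{13}P_{12}\neq P_{12}P_{13}$. That reading would therefore exclude even this solution from the boundary-free regime.
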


\begin{proof}
Assume the first three-body stage is boundary-free.
Consider the two canonical triple assemblies $T_L$ and $T_R$ from
Definition~\ref{def:YB-defect}. Both $T_L$ and $T_R$ are depth-$2$ composites
built purely from the depth-$1$ generators $R_{12}, R_{13}, R_{23}$, and thus
represent three-body interaction data obtained by iterated pairwise assembly
of the same two-body primitive.

By boundary-freeness, all depth-$2$ three-body interaction data are completely
determined by the depth-$1$ generators together with functorial compatibilities,
and no new independent three-body generator is introduced. In particular, any two
depth-$2$ composites constructed from the same depth-$1$ data must agree as
operators. Therefore,
\[
T_L = T_R,
\]
which is precisely the Yang-Baxter equation.
\end{proof}

\begin{remark}
In the present framework, the Yang-Baxter equation should not be viewed as an
independent algebraic assumption imposed on the model. Rather, it arises as the
first nontrivial compatibility condition associated with boundary-free
interaction propagation.

At the level of two-body interactions, no compatibility constraint is required:
the primitive interaction data are simply given. The first point at which
nontrivial constraints appear is the genuinely three-body stage, where different
orders of pairwise assembly must be reconciled. Boundary-freeness at this stage
requires that these distinct assemblies yield consistent results.

In $R$-matrix realizations, this consistency is expressed precisely by the
Yang-Baxter relation. In this sense, the Yang-Baxter equation is the
structural manifestation of boundary-free propagation at the first level where
nontrivial compatibility conditions arise.
\end{remark}

The converse direction is also true in standard integrable settings, but it
requires an explicit mild hypothesis on the structure of the depth-$2$
three-body sector in the realization.

\begin{definition}
\label{def:pairwise-generation}
An $R$-matrix realization is said to be \emph{pairwise-generated at depth $2$}
if every represented depth-$2$ interaction datum on $V^{\otimes 3}$ is obtained
by iterated pairwise assembly of the primitive $R$, and the only ambiguity at
this stage is the choice between the two canonical triple assemblies $T_L$ and
$T_R$. Equivalently, there are no additional independent depth-$2$ three-body
primitives beyond those arising from words in $R_{12}, R_{13}, R_{23}$.
\end{definition}

\begin{proposition}
\label{prop:ybe-implies-no-boundary}
Assume that an $R$-matrix realization is pairwise-generated at depth $2$
in the sense of Definition~\ref{def:pairwise-generation}. If the Yang-Baxter
equation holds (equivalently, $\Delta_R = 0$), then the first three-body stage
is boundary-free in the sense of Definition~\ref{def:threebody-closure}.
\end{proposition}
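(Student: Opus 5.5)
The plan is to unwind Definition~\ref{def:threebody-closure} and show that, under pairwise generation, every candidate source of a new depth-$2$ three-body generator is already accounted for by the depth-$1$ data and one compatibility relation. By Definition~\ref{def:pairwise-generation}, every represented depth-$2$ datum on $V^{\otimes 3}$ is a word in $R_{12},R_{13},R_{23}$. Hence it lies in $\mathcal B_1(R)$ of Definition~\ref{def:first-threebody-algebra}, and in particular no primitive outside $\mathcal B_1(R)$ is present. It therefore remains only to show that no \emph{independent} datum arises inside $\mathcal B_1(R)$ at depth $2$.

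The key step uses the second clause of Definition~\ref{def:pairwise-generation}: the only ambiguity at depth $2$ is the choice between the two canonical assemblies $T_L$ and $T_R$. I would argue as follows.

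\begin{enumerate}
\item Suppose some depth-$2$ datum were not determined by the depth-$1$ generators together with functorially induced relations. By pairwise generation, it could only arise from an unresolved discrepancy between two assemblies of the same pairwise data.
\item Under the hypothesis, the only such discrepancy is $\Delta_R=T_L-T_R$. So the only candidate for a new intrinsically three-body generator is $\Delta_R$ itself, or elements built from it (for example $T_L T_R^{-1}$, using invertibility of $R$).
\item Since $\Delta_R=0$ by the Yang--Baxter assumption, the two assemblies agree, and every depth-$2$ datum is a uniquely determined element of $\mathcal B_1(R)$.
\item Tensor-product functoriality (conjugation by $P_{23}$, which defines $R_{13}$) transports this single relation to all placements. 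The resulting relation set is therefore functorially induced and finite.
\end{enumerate}

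This is exactly the closure condition of Definition~\ref{def:threebody-closure}.

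The main obstacle is making precise that ``the only ambiguity is the choice between $T_L$ and $T_R$'' really exhausts the possible sources of a new generator. Other words in $R_{12},R_{13},R_{23}$, such as reorderings of length-two products, or words with repeated factors or inverses, might seem to produce independent depth-$2$ data. I would handle this by reading Definition~\ref{def:pairwise-generation} as asserting that all such words are reduced, modulo depth-$1$ data and functoriality, to one of the two canonical triple assemblies. The Yang--Baxter relation then identifies those two assemblies, leaving no residual independent element.

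If this reading were too weak, the fallback would be to introduce a braid-type normal form. Using invertibility of $R$ and the relation $T_L=T_R$, I would rewrite any word into a canonical ordered product, mirroring the braid relation. This would show directly that the three-body algebra is presented by the depth-$1$ generators subject to Yang--Baxter alone, so that no additional three-body generator is forced.
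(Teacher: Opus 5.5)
This is essentially the paper's argument: pairwise generation makes every depth-$2$ datum on $V^{\otimes 3}$ a word in $R_{12},R_{13},R_{23}$, the hypothesis confines all ambiguity to $T_L$ versus $T_R$, and $\Delta_R=0$ turns that ambiguity into the compatibility relation $T_L=T_R$ rather than a new generator, which is the closure condition. Your braid normal-form fallback is not needed, and it would not work as stated: $T_L=T_R$ alone neither relates words such as $R_{12}R_{23}$ and $R_{23}R_{12}$ nor presents $\mathcal B_1(R)$ (for $R$ the flip, $\mathcal B_1(R)$ is the image of the group algebra of $S_3$ and satisfies $R_{12}^2=1$, which does not follow from Yang--Baxter).
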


\begin{proof}
Assume pairwise generation at depth $2$, and assume $\Delta_R = 0$, i.e.
\[
T_L = T_R.
\]
By pairwise generation, any represented depth-$2$ three-body interaction
operator is obtained by iterated composition of the pairwise generators
$R_{12}, R_{13}, R_{23}$; thus, such an operator is represented by a word in
these generators. Moreover, by the hypothesis, the only potential ambiguity in
identifying such composites at the first three-body stage arises from the
ordering of pairwise interactions, captured by the two canonical assemblies
$T_L$ and $T_R$.

Since $T_L = T_R$, this ambiguity reduces to a compatibility relation rather
than an independent new datum. Therefore, passing from depth $1$ to depth $2$
on three tensor factors introduces no new independent generator: all
depth-$2$ three-body interaction data are determined by the depth-$1$ sector
together with the compatibility $T_L = T_R$. This is exactly boundary-freeness at the first three-body stage in the sense of
Definition~\ref{def:threebody-closure}.
\end{proof}

\begin{corollary}
\label{cor:bridge-summary}
In $R$-matrix realizations, boundary-freeness at the first genuinely three-body
stage implies the Yang-Baxter equation
(Proposition~\ref{prop:no-boundary-implies-ybe}). Conversely, the
Yang-Baxter equation implies boundary-freeness provided the depth-$2$
three-body sector is pairwise-generated by $R$ in the sense of
Definition~\ref{def:pairwise-generation}
(Proposition~\ref{prop:ybe-implies-no-boundary}).
\end{corollary}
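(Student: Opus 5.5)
The corollary is a packaging of the two preceding propositions, so the plan is to prove each implication by citing the corresponding result and checking that the hypotheses line up exactly. Throughout, fix an $R$-matrix realization $(V,R)$ as in Definition~\ref{def:R-matrix-realization}. Let $R_{12},R_{13},R_{23}$ be as above, with $R_{13}=P_{23}R_{12}P_{23}$. The phrase ``boundary-freeness at the first genuinely three-body stage'' will be read as Definition~\ref{def:threebody-closure}, which the subsequent remark identifies with the specialization of ``no structural boundary'' to depth $2$ on $V^{\otimes 3}$.

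For the forward direction, assume boundary-freeness. Proposition~\ref{prop:no-boundary-implies-ybe} then gives $T_L=T_R$, i.e.\ $\Delta_R=0$, which is the Yang--Baxter equation $R_{12}R_{13}R_{23}=R_{23}R_{13}R_{12}$. Before citing it, I will check one point: $T_L$ and $T_R$ are depth-$2$ words in the depth-$1$ generators of $\mathcal B_1(R)$ from Definition~\ref{def:first-threebody-algebra}. This holds because they are products of the three generators, so they fall within the scope of the closure hypothesis.

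For the converse, assume the Yang--Baxter equation together with pairwise generation at depth $2$ (Definition~\ref{def:pairwise-generation}). Proposition~\ref{prop:ybe-implies-no-boundary} then yields boundary-freeness. The only thing to verify is that pairwise generation is stated for the same realization and the same depth-$2$ sector on $V^{\otimes 3}$ appearing in Definition~\ref{def:threebody-closure}. Then the sole candidate for a new generator, the ambiguity between $T_L$ and $T_R$, is absorbed into the compatibility relation $\Delta_R=0$.

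The step I expect to need care is not computational but definitional. The hypothesis of pairwise generation must not be dropped in the converse. Without it, the depth-$2$ sector could contain an intrinsically three-body primitive outside $\mathcal B_1(R)$, and the Yang--Baxter equation alone would not exclude it. I would state this explicitly to explain why the corollary is asymmetric: one direction holds unconditionally, and the other holds only under Definition~\ref{def:pairwise-generation}.
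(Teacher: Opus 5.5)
Your proposal is correct and matches the paper's proof: it fixes the notation $R_{12},R_{13},R_{23},T_L,T_R,\Delta_R$, invokes Proposition~\ref{prop:no-boundary-implies-ybe} for the forward implication, and invokes Proposition~\ref{prop:ybe-implies-no-boundary} under pairwise generation for the converse. Your remark on why Definition~\ref{def:pairwise-generation} cannot be dropped is a reasonable gloss on the asymmetry, but the argument does not need it.
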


\begin{proof}
Let $V$ be a vector space and let $R \in \mathrm{End}(V \otimes V)$ be invertible.
On $V^{\otimes 3}$ define
\[
R_{12} := R \otimes \mathrm{id}, \qquad
R_{23} := \mathrm{id} \otimes R, \qquad
R_{13} := P_{23}\, R_{12}\, P_{23},
\]
where $P_{23}$ swaps the last two tensor factors. Let
\[
T_L := R_{12} R_{13} R_{23}, \qquad
T_R := R_{23} R_{13} R_{12},
\]
and $\Delta_R := T_L - T_R$.

If the realization is boundary-free at the first three-body stage, then by
Proposition~\ref{prop:no-boundary-implies-ybe} one has $\Delta_R = 0$, i.e.
the Yang-Baxter equation holds.

Conversely, suppose the realization is pairwise-generated at depth $2$ and
that $\Delta_R = 0$. By Proposition~\ref{prop:ybe-implies-no-boundary},
the ambiguity in assembling three-body interaction data from the pairwise
generators collapses to a compatibility relation rather than producing a new
independent generator. It follows that no structural boundary occurs at the
first three-body stage. This establishes both implications.
\end{proof}

\begin{remark}
The applicability criterion may be identified concretely in familiar
integrable systems through the presence of stable factorization and
compatibility of interaction data.

In lattice spin systems and quantum chains, this typically appears as the
existence of a two-body $R$-matrix whose copies generate the interaction
structure on tensor powers, together with Yang-Baxter-type compatibility.
In such settings, the absence of structural boundary at the first genuinely
three-body stage corresponds to the consistency of pairwise interaction
assembly, reflected concretely in Yang-Baxter relations.

More generally, the applicability regime may be recognized by the property
that higher-order interaction data are controlled by bounded-depth
constructions together with finitely many compatibility conditions.
This provides a practical criterion: models exhibiting stable factorization
of multi-body interaction into iterated two-body data, without the appearance
of new independent higher-order generators, fall within the scope of the
applicability criterion.
\end{remark}

\section{Examples and Structural Interpretation}

We illustrate the Structural Applicability Criterion through representative examples. 
The purpose of this section is not to provide detailed derivations, but to show how 
the presence or failure of phase duality, symmetry compatibility, and finite defect 
propagation governs the emergence of rigidity and Bethe-type solvability.

\subsection{Bethe-Solvable Model: Heisenberg Spin Chain}

The Heisenberg spin chain provides a canonical example of a system admitting 
Bethe-type exact solvability. The interaction is local and translation-invariant, 
and the associated scattering processes exhibit complete factorization. From the perspective of Algebraic Phase Theory, this system exhibits:

\begin{itemize}
    \item \textbf{Phase Duality:} The existence of a complete set of phase labels 
    (quasi-momenta) separating admissible states.
    
    \item \textbf{Symmetry Compatibility:} Dynamics preserve the interaction structure 
    and respect the underlying commutation relations.
    
    \item \textbf{Finite Termination:} Higher-order interaction effects do not generate 
    new independent structural data; propagation closes after finite depth.
\end{itemize}

Consequently, the Structural Applicability Criterion is satisfied, and the 
resulting rigidity manifests as Bethe-type solvability. The apparent complexity 
of many-body interaction collapses to a finitely controlled, factorized structure.

\subsection{APT-Valid Rigid Domain: Stabilizer-Type Structures}

A similar structural pattern appears in stabilizer-type quantum systems and 
Frobenius--Heisenberg phase models. In these settings, phase duality is realised 
through character-like labelling, and admissible dynamics preserve the phase 
interaction exactly. Defect propagation is finitely controlled, and no new interaction data emerges 
beyond a fixed depth. As a result, these systems exhibit strong rigidity, including 
uniqueness of representation and the presence of error-invisible substructures.

This example demonstrates that the Structural Applicability Criterion captures a 
broader class of rigid phenomena beyond integrable models, highlighting the 
structural scope of Algebraic Phase Theory.

\subsection{Failure of the Criterion: Nonlinear and Chaotic Systems}

In contrast, consider a generic nonlinear or chaotic system. In such domains, 
interaction typically generates increasingly complex higher-order effects, and 
no finite defect filtration controls the propagation. From the perspective of Algebraic Phase Theory:

\begin{itemize}
    \item Phase duality may fail to separate interaction data.
    \item Admissible dynamics do not preserve commutation structure.
    \item Defect propagation does not terminate.
\end{itemize}

As a result, the Structural Applicability Criterion is violated. No nondegenerate 
APT structure can arise, and no Bethe-type exact solvability is possible. The 
growth of interaction complexity reflects the absence of structural closure.

\subsection{Interpretation}

These examples illustrate a common structural principle. Bethe-type solvability, 
Fourier-type decomposition, and representation-theoretic rigidity do not arise 
as isolated phenomena. They occur precisely in domains where interaction admits 
nondegenerate phase duality, symmetry-compatible dynamics, and finite defect 
propagation.

Conversely, in the absence of these conditions, interaction complexity cannot be 
reduced, and no exact solvability mechanism of Bethe type can emerge. In this sense, Algebraic Phase Theory does not construct solvable models, but 
identifies the structural regime in which solvability is unavoidable. This provides 
a concrete structural perspective on why exact solvability appears only in 
exceptional many-body systems.

\section{Conclusion}

This work has shown that Bethe-type exact solvability is governed by a sharp
structural dichotomy. In systems where interaction propagation terminates after
finite depth and no structural boundary is encountered, global interaction data
factor through finitely many lower-order components within the applicability regime. In this regime, Bethe-type
solvability is not optional or method-dependent, but arises as a structural
consequence. Conversely, once a structural boundary appears, irreducible
interaction data arise that cannot be captured by bounded-depth factorization,
and exact solvability is obstructed in representation-theoretic realizations.

From this perspective, the Bethe Ansatz is neither miraculous nor foundational.
It does not create solvability, nor does it rely on inspired guessing. Rather,
it functions as a concrete computational witness to an underlying rigidity
phenomenon dictated by the structure of interaction itself. Its success and its
sharp failure are both explained by the same structural mechanism.

More broadly, the results clarify why exactly solvable interacting many-body
systems are exceptional rather than generic. Exact solvability should be
understood not as an analytic accident, but as a consequence of deep constraints
on how interaction can propagate.

This provides a structural resolution of the question emphasized by Feynman.
Exact solvability arises from constrained interaction propagation, and fails
upon the formation of structural boundaries. The present work addresses the
applicability regime in which such rigidity occurs. Complementary work analyzes
the opposite regime, where structural boundaries form and solvability is
typically obstructed. In this way, the existence and the limitation of
Bethe-type methods are both explained within a single structural framework.

\bibliographystyle{plain}
\bibliography{references}

\end{document}